\renewcommand{\thefootnote}{}
\newtheorem{thm}{\textbf Theorem}[section]
\newtheorem{lem}{\textbf Lemma}[section]
\newtheorem{rem}{\textbf Remark}[section]
\newtheorem{cor}{\textbf Corollary}[section]
\numberwithin{equation}{section}
\newcommand{\be}{\begin{eqnarray}}
\newcommand{\ee}{\end{eqnarray}}
\newcommand{\bes}{\begin{eqnarray*}}
\newcommand{\ees}{\end{eqnarray*}}
\def\v{\vspace{0.1in}}
\def\be{\begin{equation}}
\def\ee{\end{equation}}
\def\bee{\begin{eqnarray}}
\def\ene{\end{eqnarray}}
\def\bes{\begin{subequations}}
\def\ees{\end{subequations}}
\begin{document}

\baselineskip=13pt
\renewcommand {\thefootnote}{\dag}
\renewcommand {\thefootnote}{\ddag}
\renewcommand {\thefootnote}{ }

\pagestyle{plain}

\begin{center}
\baselineskip=16pt \leftline{} \vspace{-.3in} {\Large \bf The Cauchy problem and wave-breaking  phenomenon for a generalized sine-type FORQ/mCH equation } \\[0.2in]
\end{center}

\begin{center}
{\bf Guoquan Qin$^{\rm 1,2}$, Zhenya Yan$^{\rm 2,3,*}$, Boling Guo$^{\rm 4}$}\footnote{$^{*}$Corresponding author. {\it Email address}: zyyan@mmrc.iss.ac.cn (Z. Yan). }  \\[0.1in]
{\it\small  $^1$National Center for Mathematics and Interdisciplinary Sciences, Academy of Mathematics and Systems Science, Chinese Academy of Sciences, Beijing 100190, China \\
$^2$Key Laboratory of Mathematics Mechanization, Academy of Mathematics and Systems Science, \\ Chinese Academy of Sciences, Beijing 100190, China \\
 $^3$School of Mathematical Sciences, University of Chinese Academy of Sciences, Beijing 100049, China\\
 $^4$Institute of Applied Physics and Computational Mathematics, Beijing 100088, PR China} \\
\end{center}

\vspace{0.4in}

{\baselineskip=13pt


\vspace{-0.28in}

\noindent {\bf Abstract}\, {\small In this paper, we are concerned with   the Cauchy problem and wave-breaking  phenomenon
for a sine-type modified  Camassa-Holm (alias sine-FORQ/mCH) equation. Employing the transport equations theory and the Littlewood-Paley theory, we first establish  the local well-posedness for the strong solutions of the sine-FORQ/mCH equation in Besov spaces.
In light of the   Moser-type estimates, we are able  to derive the  blow-up criterion and  the precise blow-up quantity
 of this equation in  Sobolev spaces.  We  then give a sufficient condition with respect to the initial  data to ensure  the occurance of the  wave-breaking phenomenon by  trace the precise blow-up quantity along the characteristics associated with this equation.}

\vspace{0.15in}


\noindent {\small{\bf Keywords} Sine-mCH equation; Cauchy problem; Wave breaking; Local well-posedness} \vspace{0.1in}

\noindent {\small{\bf Mathematics Subject Classification} 35B44; 35G25; 35L05; 35A01}

\baselineskip=13pt


\section{Introduction}
\setcounter{equation}{0}
 The  Camassa-Holm (CH) equation~\cite{CamassaHolm1993PRL}
\begin{eqnarray}\label{CH}
m_{t}
+u m_{x}+2u_{x} m=0, \quad m=u-u_{xx},
\end{eqnarray}
which  describes the unidirectional propagation of shallow water waves over a flat bottom \cite{CamassaHolm1993PRL,
DullinGottwaldHolm2001PRL,Johnson2002JFM} or the propagation of axially symmetric waves in hyperelastic rods
\cite{ConstantinStrauss2000PLA, Dai1998AM},
has been extensively investigated in the literature.
(\ref{CH}) can  be  derived  from
 the celebrated  Korteweg-de Vries (KdV) equation by means of the tri-Hamiltonian duality~\cite{OlverRosenau1996PRE} and
 has some similar properties to the
 KdV equation.
For instance, it has infinitely many conservation laws
and is  completely integrable
\cite{CamassaHolm1993PRL,FokasFuchssteiner1981PD}.
Also, it can be recast as the
 following  bi-Hamilton structure~\cite{CamassaHolm1993PRL}
 \begin{eqnarray*}
m_{t}=J_1\frac{\delta H_{CH,1}}{\delta m}=K_1\frac{\delta H_{CH,2}}{\delta m},\quad J_1=-\frac12 (m \partial+\partial m),\quad K_1=\partial^3-\partial
 \end{eqnarray*}
where
\begin{eqnarray*}
H_{CH,1}=\int_{\mathbb{R}}mu \mbox{d}x,\qquad
H_{CH,2}=\frac{1}{2}\int_{\mathbb{R}}(u^{3}+uu_{x}^{2}) \mbox{d}x.
 \end{eqnarray*}
Eq.~(\ref{CH}) also admits the Lax pair~\cite{CamassaHolm1993PRL},
which allows us  to solve the Cauchy problem of (\ref{CH})
by the inverse scattering transform (IST)~\cite{ConstantinGerdjikovIvanov2006IP}
as well as to  construct the   action angle variables of (\ref{CH}) \cite{BealsSattingerSzmigielski1998AM,
BoutetKostenkoShepelskyTeschl2009SJMA,  ConstantinGerdjikovIvanov2006IP,ConstantinMcKean1999CPAM}.
However, there  are some  significant difference between (\ref{CH}) and KdV equation.
For example,
Eq.~(\ref{CH}) can model the
so-called   wave-breaking  phenomenon in nature,
namely, the wave itself  remains bounded, while its slope becomes unbounded in finite time
\cite{BealsSattingerSzmigielski2000AM,ConstantinEscher1998AM,Whitham1980},
which  can not be derived from the KdV equation.
The problem concerning  how the solution develops after the wave-breaking  has been researched  in \cite{BressanConstantin2007ARMA,
HoldenRaynaud2007CPDE}
and \cite{BressanConstantin2007AA,
HoldenRaynaud2009DCDS},
where  global conservative solutions
or global dissipative solutions are constructed,
respectively.
The CH equation (\ref{CH}) also admits a kind of solutions called peakons
\cite{BealsSattingerSzmigielski2000AM,
CamassaHolm1993PRL,
Constantin2006IM,
ConstantinEscher2007BAMC,
ConstantinEscher1996AM,
ConstantinStrauss2000CPAM,
Lenells2005JDE}
whose shapes  are stable
under small perturbations
\cite{ConstantinStrauss2000CPAM,
Lenells2004IMRN}.
(\ref{CH}) is also related to geometry.
 Firstly, it represents
the geodesic flows.
In the aperiodic case and when some asymptotic conditions are satisfied at
infinity,
equation (\ref{CH}) can be viewed as  the geodesic flow on
a manifold of diffeomorphim of the line
\cite{Constantin2000AIF}.
In the periodic case,
it can represent
the geodesic flow on the diffeomorphism group of the circle
\cite{ConstantinKolev2003CMH}.
Secondly,
it can represent the families of
 pseudo-spherical surfaces
 \cite{GorkaReyes2011IMRN,
 Reyes2002LMP,
 GuiLiuOlverQu2013CMP}.
Thirdly, it appears  from a
non-stretching invariant planar curve flow in the
 centro-equiaffine geometry
  \cite{ChouQu2002PD,
  GuiLiuOlverQu2013CMP}.
Other mathematical facts about equation
(\ref{CH}) include the existence of a
recursion operator
\cite{CamassaHolm1993PRL,
CamassaHolm1994AAM,
Fokas1995PD,
FokasFuchssteiner1981PD,
Fuchssteiner1996PD},
and so on.
The local-in-time well-posedness for the initial value problem of (\ref{CH}) has been
achieved~\cite{ConstantinEscher1998CPAM,
Danchin2001DIE}.
Eq.~(\ref{CH}) also admits global strong solutions~\cite{Constantin2000AIF,
ConstantinEscher1998CPAM}, and finite time blow-up strong solutions
\cite{Constantin2000AIF,
ConstantinEscher1998CPAM,
ConstantinEscher1998AM}.

The nonlinear terms in Eq.~(\ref{CH}) are   quadratic,
while, there do exist other CH-type equations with cubic nonlinearity, for instance,
the following modified-CH (mCH) equation (alias the Fokas-Olver-Rosenau-Qiao (FORQ) equation)~\cite{Fokas1995PD,Fuchssteiner1996PD,OlverRosenau1996PRE,Qiao2006JMP}
\begin{eqnarray}\label{mCH}
m_{t}+[(u^{2}-u_{x}^{2}) m]_{x}=0, \quad m=u-u_{xx},
\end{eqnarray}
which  was first derived
  by operating  the tri-Hamiltonian duality to the
  bi-Hamiltonian representation
   of the modified KdV equation
\cite{Fuchssteiner1996PD,
  OlverRosenau1996PRE}.
  Later, it was rederived
from the two-dimensional Euler equations
\cite{Qiao2006JMP}.
It is also completely integrable.
Its  bi-Hamiltonian structure reads
\cite{OlverRosenau1996PRE,
QiaoLi2011TMP,
GuiLiuOlverQu2013CMP}
\begin{eqnarray*}
m_{t}=J_2 \frac{\delta H_{mCH,1}}{\delta m}=K_2 \frac{\delta H_{mCH,2}}{\delta m},\quad
J_2=-\partial_{x} m \partial_{x}^{-1} m \partial_{x},\quad K_2=\partial_{x}^{3}-\partial_{x},
\end{eqnarray*}
 where the  Hamiltonians are
 \begin{eqnarray*}
H_{mCH,1}=\int_{\mathbb{R}} m u \mbox{d} x, \quad H_{mCH,2}=\frac{1}{4} \int_{\mathbb{R}}(u^{4}+2 u^{2} u_{x}^{2}-\frac{1}{3} u_{x}^{4}) \mbox{d} x.
\end{eqnarray*}
Also,  (\ref{mCH}) can be solved
by using IST since
it has   the following Lax pair \cite{QiaoLi2011TMP, GuiLiuOlverQu2013CMP}.
From the viewpoint of geometry,
equation (\ref{mCH}) arises from an intrinsic
(arc-length preserving) invariant planar curve flow in Euclidean geometry
\cite{GuiLiuOlverQu2013CMP}.
The local-wellposedness, blow-up and
wave breaking problems   of equation (\ref{mCH})
has been  investigated  in
\cite{ChenLiuQuZhang2015AM,
FuGuiLiuQu2013JDE,
GuiLiuOlverQu2013CMP,
LiuQuZhang2014AA}.
The explicit form of the multipeakons of  equation (\ref{mCH})
has been discussed in
\cite{ChangSzmigielski2018CMP}.
The peakon solution of equation (\ref{mCH})
is orbital stability
 \cite{LiuLiuQu2014AM,
 QuLiuLiu2013CMP}.
 The H\"{o}lder continuity of equation (\ref{mCH})
has been addressed  in
\cite{HimonasMantzavinos2014JNS}.

In this paper, we consider the following Cauchy problem of the sine-type
modified Camassa-Holm (alias the sine-FORQ/mCH) equation~\cite{anco2019}
\begin{eqnarray}\begin{cases}\label{sin-mCH}
m_{t}
+[\mbox{sin}(u^{2}-u_{x}^{2})m]_{x}=0, \quad m= u-u_{xx}, \\
m(0, x)=m_{0}(x),
\end{cases}
\end{eqnarray}
where $u(t, x)$ denotes the fluid velocity
and $m(t, x)=u-u_{xx}$ stands for the corresponding potential density. Note that
Eq.~(\ref{sin-mCH}) admits the conserved quantity $H_{1}=\int_{\mathbb{R}} m u\mbox{d} x$. We know that
\begin{itemize}

\item {} As $ u^2-u_x^2\to 0$,
\bee
 \sin(u^2-u_x^2)\sim u^2-u_x^2,
 \ene
in which Eq.~(\ref{sin-mCH}) just reduces to the FORQ/mCH equation (\ref{mCH}).

\item {} As $0<|u^2-u_x^2|<1$,
\bee
 \sin(u^2-u_x^2)\sim \sum_{k=1}^N\frac{(-1)^{k+1}}{(2k-1)!}(u^2-u_x^2)^{2k-1}+O((u^2-u_x^2)^{2N-1}), \quad {\rm as}\quad  0<|u^2-u_x^2|<1,
 \ene
 in which Eq.~(\ref{sin-mCH}) becomes the higher-order FORQ/mCH equation.
\end{itemize}

To the best of our knowledge,
the Cauchy problem (\ref{sin-mCH})
has not been researched  yet.
We aim to explore this problem in this paper.
 First, following the spirit of
\cite{Danchin2001DIE,
Danchin2003JDE},
we will prove   the local well-posedness for the strong
solutions of equation  (\ref{sin-mCH})
 in Besov spaces by the use of   the  Besov spaces theory
and the  transport equations theory.
Second,
the  Moser-type estimates
in  Sobolev spaces enable us
to establish a blow-up criterion
and  the precise blow-up quantity
for (\ref{sin-mCH}).
We finally
propose a sufficient condition with regard to
the initial data  to ensure the occurance of the wave-breaking phenomenon.

Before stating  our main results,
we first introduce  the solution spaces
$E_{p, r}^{s}(T)$ defined as follows
\begin{equation*}
E_{p, r}^{s}(T) \triangleq\left\{\begin{array}{l}
C\left([0, T) ; B_{p, r}^{s}\right) \cap C^{1}\left([0, T) ; B_{p, r}^{s-1}\right), \text { if } r<\infty, \vspace{0.1in}\\
C_{w}\left([0, T) ; B_{p, \infty}^{s}\right) \cap C^{0,1}\left([0, T) ; B_{p, \infty}^{s-1}\right), \text { if } r=\infty.
\end{array}\right.
\end{equation*}

We next introduce some notations to be used in this paper.

\textbf{Notations.}
Let $p(x)=\frac{1}{2}e^{-|x|}(x\in\mathbb{R})$ be the fundamental solution of $1-\partial_{x}^{2}$
on $\mathbb{R}$, and two convolution operators
 $p_{\pm}$ be~\cite{ChenGuoLiuQu2016JFA}
\begin{eqnarray}
p_{\pm} * f(x)=\frac{1}{2}e^{\mp x} \int_{-\infty}^{\pm x} e^{y} f(\pm y) d y,\label{pPlus}
\end{eqnarray}
where the star denotes the spatial convolution.
Note that $p$ and $p_{\pm}$ satisfy
\begin{equation}\label{pPlusMinus}
p=p_{+}+p_{-}, \quad p_{x}=p_{-}-p_{+},\quad
p_{xx}*f=p*f-f.
\end{equation}
Let  $\mathcal{S}$ stands for the Schwartz space
and
$\mathcal{S}^{\prime}$ represents the spaces of  temperate distributions.
Let $L^{p}(\mathbb{R})$ be the Lebesgue space  equipped with the norm $\|\cdot\|_{L^{p}}$ for $1 \leq p \leq \infty$ and
  $H^{s}(\mathbb{R})$ be the Sobolev space equipped with the
norm $\|\cdot\|_{H^{s}}$ for
 $s \in \mathbb{R}$.\\

Our first  result concerning
 the local well-posedness to (\ref{sin-mCH})
 in Besov spaces reads

\begin{thm}\label{thmLocal}
Assume that  $1 \leq p, r \leq+\infty, s>\max \{2+1 / p, 5 / 2\}.$ Let $u_{0} \in B_{p, r}^{s}.$
 Then there is a time $T>0$ such that
 the Cauchy problem (\ref{sin-mCH}) admits
  a unique solution $u \in E_{p, r}^{s}(T)$.
Furthermore, the data-to-solution map $u_{0} \mapsto u$ is continuous from a neighborhood of $u_{0}$ in $B_{p, r}^{s}$ into
$$
C\left([0, T] ; B_{p, r}^{s^{\prime}}\right) \cap C^{1}\left([0, T] ; B_{p, r}^{s^{\prime}-1}\right)
$$
for each $s^{\prime}<s$ when $r=+\infty$ and $s^{\prime}=s$ when $r<+\infty$.
\end{thm}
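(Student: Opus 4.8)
The plan is to prove Theorem~\ref{thmLocal} by the classical Friedrichs-mollification / iterative-approximation scheme adapted to Besov spaces, following the transport-equation framework of Danchin. First I would rewrite the Cauchy problem~(\ref{sin-mCH}) in a form amenable to transport-equation estimates: setting $m=u-u_{xx}=(1-\partial_x^2)u$ and inverting the Helmholtz operator with the kernel $p=\tfrac12 e^{-|x|}$, one expands $[\sin(u^2-u_x^2)m]_x = \sin(u^2-u_x^2)m_x + \cos(u^2-u_x^2)(2uu_x-2u_xu_{xx})m$ and rewrites the equation as
\begin{equation*}
u_t + \sin(u^2-u_x^2)\, u_x = F(u),
\end{equation*}
where $F(u)$ collects all remaining terms and is built from $p*(\cdot)$ acting on polynomial-and-trigonometric expressions in $u,u_x$. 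The key structural point is that $F$ maps $B_{p,r}^s$ into $B_{p,r}^{s-1}$ (one derivative is gained by the convolution with $p$), and that the transport velocity $\sin(u^2-u_x^2)$ lies in $B_{p,r}^{s-1}$ with $s-1>\max\{1+1/p,3/2\}$, so it is Lipschitz and admits a well-defined flow.

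Second, I would set up the iteration $u^{(0)}=u_0$ (or its low-frequency truncation) and $u^{(n+1)}$ solving the \emph{linear} transport equation $\partial_t u^{(n+1)} + \sin((u^{(n)})^2-(u^{(n)}_x)^2)\,\partial_x u^{(n+1)} = F(u^{(n)})$ with datum $u_0$. Using the standard linear transport estimates in Besov spaces (the a~priori estimate $\|u^{(n+1)}(t)\|_{B_{p,r}^s}\le e^{CV(t)}\big(\|u_0\|_{B_{p,r}^s}+\int_0^t e^{-CV(\tau)}\|F(u^{(n)})\|_{B_{p,r}^s}\,d\tau\big)$ with $V(t)=\int_0^t\|\partial_x v\|_{B_{p,r}^{s-1}\cap L^\infty}\,d\tau$), together with the Moser-type product estimates and the composition estimate $\|G\circ f\|_{B_{p,r}^{s-1}}\le C(\|f\|_{L^\infty})(1+\|f\|_{B_{p,r}^{s-1}})$ for smooth $G$ (here $G=\sin,\cos$), one shows that on a common time interval $[0,T]$ with $T\sim (C\|u_0\|_{B_{p,r}^s})^{-1}$ the sequence $(u^{(n)})$ is uniformly bounded in $E_{p,r}^s(T)$. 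One then proves $(u^{(n)})$ is Cauchy in the weaker norm $C([0,T];B_{p,r}^{s-1})$ by writing the equation for $u^{(n+1)}-u^{(n)}$, again a linear transport equation, and controlling the right-hand side by the uniform bounds. Interpolation upgrades convergence to $C([0,T];B_{p,r}^{s'})$ for every $s'<s$, and a standard Fatou/weak-compactness argument (plus Lion--Magenes lemma for time-continuity when $r<\infty$) shows the limit $u$ lies in $E_{p,r}^s(T)$ and solves~(\ref{sin-mCH}).

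Third, uniqueness follows by the same $B_{p,r}^{s-1}$-energy estimate applied to the difference of two solutions and Gr\"onwall's inequality; continuity of the data-to-solution map is obtained by a now-standard argument combining the uniform bounds, the Cauchy estimate in the low norm, and interpolation (Bona--Smith type), which also explains why one loses regularity ($s'<s$) precisely in the endpoint case $r=\infty$ where $B_{p,\infty}^s$ is not separable and low-frequency truncations do not converge in norm.

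\emph{The main obstacle} I anticipate is controlling the nonlinearity $F(u)$ and the transport coefficient $\sin(u^2-u_x^2)$ at the correct regularity level: because the argument $u^2-u_x^2$ involves $u_x$, one derivative is already spent, so $\sin(u^2-u_x^2)\in B_{p,r}^{s-1}$ only, and all product and composition estimates must be carried out carefully in $B_{p,r}^{s-1}$ — this is exactly why the hypothesis $s>\max\{2+1/p,5/2\}$ (rather than the $s>\max\{1+1/p,3/2\}$ one would expect for mCH) is needed, so that $s-1$ still lies in the range where $B_{p,r}^{s-1}$ is an algebra continuously embedded in $W^{1,\infty}$. Verifying the composition estimate for $\sin,\cos$ with constants depending only on $\|u\|_{L^\infty}$ and $\|u_x\|_{L^\infty}$, and checking that the gain of one derivative from $p*(\cdot)$ is not destroyed by the extra $\partial_x$ falling on the $\sin$ term, is the technical heart of the proof; everything else is the routine transport-equation machinery.
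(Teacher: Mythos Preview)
Your overall scheme---Friedrichs iteration, linear transport estimates in Besov spaces, uniform bounds in the high norm, Cauchy in a lower norm, interpolation, and a Gronwall-type uniqueness argument---is exactly the strategy the paper uses. But there is a genuine gap in the level at which you propose to run it.

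You formulate the iteration for $u$ in $B^{s}_{p,r}$, with transport velocity $v=\sin(u^{2}-u_{x}^{2})$. Since $u^{2}-u_{x}^{2}$ already contains $u_{x}$, the composition estimate gives only $v\in B^{s-1}_{p,r}$, hence $\partial_{x}v\in B^{s-2}_{p,r}$. The a~priori transport estimate at level $\sigma=s$ (Lemma~\ref{22lem1}, case $\sigma>1+1/p$) requires $\partial_{x}v\in B^{s-1}_{p,r}$, which fails by one derivative. A second, related problem: you assert that $F(u)$ is ``built from $p*(\cdot)$ acting on polynomial-and-trigonometric expressions in $u,u_{x}$''. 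For CH and mCH this is true because of algebraic cancellations, but here the factor $\cos(u^{2}-u_{x}^{2})$ blocks those identities; the nonlocal term genuinely involves $m=u-u_{xx}$ (cf.\ the paper's formula $u_{t}+\sin(u^{2}-u_{x}^{2})u_{x}=-2p*(uM)-2p_{x}*(u_{x}M)$ with $M=\cos(u^{2}-u_{x}^{2})u_{x}m$). Consequently $F(u)\in B^{s-1}_{p,r}$ only, so even if the velocity problem were resolved the forcing would not close the estimate at level $s$. Your own text reflects this tension: you first say $F:B^{s}_{p,r}\to B^{s-1}_{p,r}$, then write the transport estimate with $\|F(u^{(n)})\|_{B^{s}_{p,r}}$.

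The paper avoids both obstacles by running the entire argument at the level of $m$ in $B^{s-2}_{p,r}$: it iterates on
\[
\partial_{t}m^{(l+1)}+\sin\!\big((u^{(l)})^{2}-(u_{x}^{(l)})^{2}\big)\,\partial_{x}m^{(l+1)}
=-2\cos\!\big((u^{(l)})^{2}-(u_{x}^{(l)})^{2}\big)\,u_{x}^{(l)}\big(m^{(l)}\big)^{2}.
\]
Now the transport velocity still lies in $B^{s-1}_{p,r}$, but at regularity level $\sigma=s-2$ this is exactly what Lemma~\ref{22lem1} needs (either $\partial_{x}v\in B^{s-3}_{p,r}$ in the high-regularity case, or merely $\partial_{x}v\in L^{\infty}\cap B^{1/p}_{p,\infty}$ otherwise), and the forcing $-2\cos(\cdot)\,u_{x}m^{2}$ sits in $B^{s-2}_{p,r}$ because $s-2>\max\{1/p,1/2\}$ makes $B^{s-2}_{p,r}$ an algebra. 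The Cauchy estimate is then carried out in $B^{s-3}_{p,r}$. This shift from $u$ to $m$ is precisely what the threshold $s>\max\{2+1/p,5/2\}$ encodes, and it is the one structural correction your plan needs.
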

Taking $p=r=2$ in Theorem \ref{thmLocal},
one immediately obtains the following Corollary
concerning the local well-posedness of (\ref{sin-mCH})
in the Sobolev spaces setting,
which is more convenience for us
to establish  our blow-up results.
\begin{cor}\label{corLocal}
Suppose $s>5 / 2 $  and $u_{0} \in H^{s}.$
  Then there is a time $T>0$ such that the Cauchy problem (\ref{sin-mCH}) admits a unique strong solution
  $u \in C\left([0, T] ; H^{s}\right)
  \cap C^{1}\left([0, T] ; H^{s-1}\right)$.
Moreover, the data-to-solution map $u_{0} \mapsto u$ is continuous from a neighborhood of $u_{0}$ in $H^{s}$ into $C\left([0, T] ; H^{s}\right) \cap$ $C^{1}\left([0, T] ; H^{s-1}\right)$.
\end{cor}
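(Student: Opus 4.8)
The plan is to deduce this statement directly from Theorem~\ref{thmLocal} by specializing the indices to $p=r=2$ and invoking the classical identification of the inhomogeneous Besov space $B^{s}_{2,2}$ with the $L^{2}$-based Sobolev space $H^{s}$. First I would recall that, by the Littlewood--Paley decomposition together with Plancherel's theorem, one has $B^{s}_{2,2}(\mathbb{R})=H^{s}(\mathbb{R})$ with equivalent norms for every $s\in\mathbb{R}$; concretely, $\|f\|_{H^{s}}^{2}\sim\sum_{j\ge-1}2^{2js}\|\Delta_{j}f\|_{L^{2}}^{2}$, which is exactly the square of the $B^{s}_{2,2}$-norm. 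Consequently, with $p=r=2$ the solution space $E^{s}_{2,2}(T)$ appearing in Theorem~\ref{thmLocal} (the case $r<\infty$) coincides with $C([0,T);H^{s})\cap C^{1}([0,T);H^{s-1})$.

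Next I would verify that the hypothesis of Theorem~\ref{thmLocal} reduces to the one stated in the corollary: with $p=2$ we have $\max\{2+1/p,\,5/2\}=\max\{5/2,\,5/2\}=5/2$, so $s>5/2$ is precisely the required lower bound, and $u_{0}\in H^{s}=B^{s}_{2,2}$ is precisely the requirement $u_{0}\in B^{s}_{p,r}$. Applying Theorem~\ref{thmLocal} then produces a time $T>0$ and a unique solution $u\in E^{s}_{2,2}(T)=C([0,T];H^{s})\cap C^{1}([0,T];H^{s-1})$.

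Finally, for the continuous-dependence assertion, since $r=2<+\infty$ we are in the case $s^{\prime}=s$ of Theorem~\ref{thmLocal}; hence the data-to-solution map $u_{0}\mapsto u$ is continuous from a neighborhood of $u_{0}$ in $B^{s}_{2,2}=H^{s}$ into $C([0,T];B^{s}_{2,2})\cap C^{1}([0,T];B^{s-1}_{2,2})=C([0,T];H^{s})\cap C^{1}([0,T];H^{s-1})$, which is exactly the claim. There is essentially no obstacle here: the sole nontrivial ingredient is the standard norm equivalence $B^{s}_{2,2}=H^{s}$, and everything else is a verbatim specialization of Theorem~\ref{thmLocal}, so the argument amounts to careful bookkeeping of the Besov indices.
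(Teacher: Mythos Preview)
Your proposal is correct and follows exactly the paper's approach: the paper simply states that the corollary is obtained by taking $p=r=2$ in Theorem~\ref{thmLocal}, and your argument spells out precisely this specialization together with the standard identification $B^{s}_{2,2}=H^{s}$.
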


We next state our result on the finite time  blow-up
of the strong solution to (\ref{sin-mCH}).
\begin{thm}\label{thmBlowup}
Let $u_{0}\in H^{s}$ be given as in Corollary \ref{corLocal}
and $u$ be the corresponding solution to (\ref{sin-mCH}).
Denote by $T^{*}$ the maximal existence time, then
\begin{eqnarray}\label{blowupTime}
T^{*}< \infty\quad
\Rightarrow\quad
\int_{0}^{T^{*}}\|m\|_{L^{\infty}}^{2}
\mbox{d}t=\infty.
\end{eqnarray}
\end{thm}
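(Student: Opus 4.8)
The plan is to argue by contradiction following the standard continuation-of-solutions strategy for Camassa--Holm type equations. Suppose $T^{*}<\infty$ but $\int_{0}^{T^{*}}\|m\|_{L^{\infty}}^{2}\,\md t<\infty$. The goal is to show that then $\|u(t)\|_{H^{s}}$ stays bounded on $[0,T^{*})$, which by Corollary~\ref{corLocal} contradicts the maximality of $T^{*}$. The first step is to rewrite \eqref{sin-mCH} in the nonlocal (transported) form for $m$: expanding the flux, $m_{t}+\sin(u^{2}-u_{x}^{2})\,m_{x}=-\bigl[\partial_{x}\sin(u^{2}-u_{x}^{2})\bigr]m$, i.e. $m_{t}+a\,m_{x}=F$ with $a:=\sin(u^{2}-u_{x}^{2})$ and $F:=-2\cos(u^{2}-u_{x}^{2})(uu_{x}-u_{x}u_{xx})\,m$. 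Using $u_{xx}=u-m$ this becomes $F=-2\cos(u^{2}-u_{x}^{2})\,u_{x}(m-u+ \text{(terms)})\,m$; in any case $F$ is a smooth function of $u,u_{x},u_{xx}$ times $m$, and $a$ is a smooth bounded function of $u,u_{x}$.

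Next I would perform the energy estimate at the $H^{s-1}$ level for $m$ (equivalently $H^{s}$ for $u$, since $\|u\|_{H^{s}}\simeq\|m\|_{H^{s-2}}$; more precisely I will work with $\|m\|_{H^{s-2}}$ where $s-2>1/2$ so that $H^{s-2}$ is an algebra and the Moser-type estimates apply). Applying $\Lambda^{s-2}$ to the transport equation, pairing with $\Lambda^{s-2}m$ in $L^{2}$, and using the commutator (Kato--Ponce / Moser-type) estimate
\[
\Bigl|\int \bigl(\Lambda^{s-2}(a\,m_{x})-a\,\Lambda^{s-2}m_{x}\bigr)\,\Lambda^{s-2}m\,\md x\Bigr|
\lesssim \bigl(\|a_{x}\|_{L^{\infty}}+\|a\|_{H^{s-1}}\bigr)\|m\|_{H^{s-2}}^{2},
\]
together with $\int a\,\Lambda^{s-2}m_{x}\,\Lambda^{s-2}m = -\tfrac12\int a_{x}(\Lambda^{s-2}m)^{2}$, gives
\[
\frac{\md}{\md t}\|m\|_{H^{s-2}}^{2}\lesssim \bigl(\|a_{x}\|_{L^{\infty}}+\|a\|_{H^{s-1}}+\|F\|_{H^{s-2}}/\|m\|_{H^{s-2}}\bigr)\|m\|_{H^{s-2}}^{2}.
\]
The point now is to bound each coefficient by a constant times $(1+\|m\|_{L^{\infty}}^{2})\|m\|_{H^{s-2}}$-type quantities. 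Since $a=\sin(u^{2}-u_{x}^{2})$, $a_{x}=2\cos(u^{2}-u_{x}^{2})(uu_{x}-u_{x}u_{xx})$, and $|\cos|,|\sin|\le 1$, one has $\|a_{x}\|_{L^{\infty}}\lesssim \|u\|_{L^{\infty}}\|u_{x}\|_{L^{\infty}}+\|u_{x}\|_{L^{\infty}}\|u_{xx}\|_{L^{\infty}}\lesssim \|m\|_{L^{\infty}}^{2}$ (using $\|u\|_{L^{\infty}},\|u_{x}\|_{L^{\infty}}\lesssim\|m\|_{L^{1}\cap L^{\infty}}$ via $u=p*m$, and $\|u_{xx}\|_{L^{\infty}}\le\|u\|_{L^{\infty}}+\|m\|_{L^{\infty}}$). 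For the $H^{s-1}$ and $H^{s-2}$ norms of $a$ and $F$, since $\sin,\cos$ are smooth with bounded derivatives I would invoke the composition estimate $\|\phi(v)\|_{H^{\sigma}}\le C(\|v\|_{L^{\infty}})\|v\|_{H^{\sigma}}$ for $\sigma>0$ and the algebra property of $H^{\sigma}$ with $\sigma=s-2>1/2$, to obtain $\|a\|_{H^{s-1}}\le C(\|u\|_{W^{1,\infty}})\|u\|_{H^{s}}\lesssim C(\|m\|_{L^{\infty}})(1+\|m\|_{L^{\infty}})\|m\|_{H^{s-2}}$ and similarly $\|F\|_{H^{s-2}}\lesssim C(\|m\|_{L^{\infty}})\|m\|_{L^{\infty}}\|m\|_{H^{s-2}}$ — the crucial gain being one full power of $\|m\|_{L^{\infty}}$ (not $\|m\|_{H^{s-2}}$) in each term beyond $\|m\|_{H^{s-2}}$. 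Collecting these, I get
\[
\frac{\md}{\md t}\|m\|_{H^{s-2}}^{2}\le C\bigl(1+\|m\|_{L^{\infty}}^{2}\bigr)\|m\|_{H^{s-2}}^{2},
\]
and Gronwall's inequality yields $\|m(t)\|_{H^{s-2}}^{2}\le \|m_{0}\|_{H^{s-2}}^{2}\exp\!\bigl(C\!\int_{0}^{T^{*}}(1+\|m\|_{L^{\infty}}^{2})\md\tau\bigr)<\infty$ on $[0,T^{*})$. Hence $\|u(t)\|_{H^{s}}$ is bounded up to $T^{*}$, so the solution extends past $T^{*}$, contradicting maximality. This proves the theorem, and in fact shows the sharper two-sided statement that $T^{*}<\infty$ iff $\int_{0}^{T^{*}}\|m\|_{L^{\infty}}^{2}\md t=\infty$ (the reverse direction being the elementary observation that a finite such integral forces $u$ to stay in $H^{s}$).

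The main obstacle I anticipate is bookkeeping the composition and product estimates so that every term picks up the coefficient $(1+\|m\|_{L^{\infty}}^{2})$ and nothing worse — in particular making sure that the derivatives landing inside $\sin$ and $\cos$ (which, after one spatial derivative, produce $u_{xxx}$-type terms through $a_{x}$ differentiated further, or $u_{xx}$ in $F$) are controlled: $F$ contains $m$ times derivatives of $u$ up to order two, i.e. up to $u_{xx}=u-m$, so $\|F\|_{H^{s-2}}$ needs the algebra estimate applied to a product of $m\in H^{s-2}$ with a smooth bounded function of $(u,u_{x},u_{xx})$, and one must check that no term requires $H^{s-2}$ control of $u_{xxx}$ (it does not, since only $m_{x}$ appears with the bounded coefficient $a$, and that term is exactly the transport term handled by the commutator estimate). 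A secondary technical point is justifying the formal $\Lambda^{s-2}$ energy estimate rigorously, which is standard by mollification/Friedrichs regularization or by the Littlewood--Paley machinery already set up for Theorem~\ref{thmLocal}.
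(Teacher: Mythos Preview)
Your overall strategy (contradiction via a Gronwall bound on $\|m\|_{H^{s-2}}$) is right, but the energy estimate as written does not close, and this is exactly the difficulty the paper's proof is designed to overcome. In your commutator bound you allow a coefficient $\|a_x\|_{L^\infty}+\|a\|_{H^{s-1}}$, and then you compute $\|a\|_{H^{s-1}}\lesssim C(\|m\|_{L^\infty})\,\|m\|_{H^{s-2}}$. Plugging this back gives
\[
\frac{\md}{\md t}\|m\|_{H^{s-2}}^{2}\;\lesssim\;\bigl(\|m\|_{L^\infty}^{2}+C(\|m\|_{L^\infty})\,\|m\|_{H^{s-2}}\bigr)\|m\|_{H^{s-2}}^{2},
\]
which is cubic in $\|m\|_{H^{s-2}}$, not linear; Gronwall then yields nothing from the hypothesis $\int_0^{T^*}\|m\|_{L^\infty}^2\,\md t<\infty$. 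Likewise, the standard Kato--Ponce form $\|[\Lambda^\sigma,a]m_x\|_{L^2}\lesssim\|a_x\|_{L^\infty}\|m\|_{H^\sigma}+\|a\|_{H^\sigma}\|m_x\|_{L^\infty}$ brings in $\|m_x\|_{L^\infty}$, which is not controlled by $\|m\|_{L^\infty}$. Your claimed ``crucial gain'' (only $\|m\|_{L^\infty}$ factors beyond one $\|m\|_{H^{s-2}}$) therefore fails precisely at the commutator term.

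The reason is structural: an a~priori transport estimate using only $\|\partial_x a\|_{L^\infty}$ (no Sobolev norm of $a$) is available only for low regularity, $0\le\sigma<1$ (this is Lemma~\ref{22lem5} in the paper). For $\sigma\ge 1$ the commutator necessarily costs a Sobolev norm of the velocity, which here is comparable to $\|m\|_{H^{\sigma-1}}$ and reintroduces the high norm. The paper resolves this by induction on the integer part of $\sigma=s-2$: Step~1 treats $\sigma\in(1/2,1)$ directly via Lemma~\ref{22lem5} to obtain $\|m\|_{H^\sigma}\le\|m_0\|_{H^\sigma}\exp\bigl(C\!\int_0^t\|m\|_{L^\infty}^2\bigr)$; Steps~2--3 differentiate the equation $l$ times, apply Lemma~\ref{22lem5} to $\partial_x^l m$ at level $H^{s-l}\subset H^{\sigma'}$ with $\sigma'\in[0,1)$, and bound the forcing by $\|m\|_{H^{l-1/2+\epsilon}}^2\|m\|_{H^s}$, where $\|m\|_{H^{l-1/2+\epsilon}}$ is already controlled by the induction hypothesis. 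To repair your argument you must insert this bootstrapping: first close the estimate at some $\sigma_0\in(1/2,1)$ using only $\|a_x\|_{L^\infty}\lesssim\|m\|_{L^\infty}^2$, and then feed the resulting bound on $\|m\|_{H^{\sigma_0}}$ (hence on lower-order norms of $a$) into the higher-order commutator at level $s-2$.
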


\begin{rem} Theorems \ref{thmLocal} and \ref{thmBlowup} also hold true for the sine-mCH equation with the linear dispersive term
\begin{eqnarray}\begin{cases}\label{sin-mCH2}
m_{t}+\kappa u_x+[\sin(u^{2}-u_{x}^{2})m]_{x}=0, \quad m= u-u_{xx}, \\
m(0, x)=m_{0}(x),
\end{cases}
\end{eqnarray}
where $\kappa$ is the real-valued parameter.

\end{rem}

Based on Theorem \ref{thmBlowup},
we will establishing the following result
concerning the wave-breaking phenomenon
on the solution of the Cauchy problem (\ref{sin-mCH}).

\begin{thm}\label{waveBreaking}
Suppose $m_{0} \in H^{s}(\mathbb{R})$ with $s>\frac{1}{2}.$
Let  $T^{*}>0$ be the maximal existence time of strong solution $m$ to the Cauchy problem (\ref{sin-mCH}).
Let $M(t,x)$ be defined
as in (\ref{M}).
Set $\bar{M}(t)=M(t,q(t,x_{0}))$ and
$\bar{m}(t)=m(t,q(t,x_{0})),$
where $q(t, x_{0})$ is defined in (\ref{character}).
 Let $m_{0}(x) \geq 0$ for all $x \in \mathbb{R}$,
 and $m_{0}(x_{0})>0$ for some $x_{0} \in \mathbb{R}$.
If
\begin{equation}\label{assumption}
\bar{M}(0)<0 \quad \text { and }
\quad \frac{\bar{M}(0)}{\bar{m}(0)}
\xi+\frac{1}{2}C_{1}\xi^{2}+\frac{1}{\bar{m}(0)}
<0,
\end{equation}
where $C_{1}$ is defined below (\ref{M-2})
and
\begin{eqnarray}
\xi
=-\frac{\bar{M}(0)}{C_{1}\bar{m}(0)},
\end{eqnarray}
then the solution $m$ blows up at a time
$T^{*}\in (0,\xi).$
\end{thm}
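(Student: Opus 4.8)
The plan is to run a Riccati-type comparison argument along the characteristics, in the spirit of the wave-breaking analysis for the mCH equation, controlling the nonlocal terms through the positivity of $m$ and the conservation of $H_1$. First I would fix the functional setting: since $m_0\in H^s$ with $s>\tfrac12$, we have $u_0=p*m_0\in H^{s+2}$ with $s+2>\tfrac52$, so Corollary~\ref{corLocal} provides a unique maximal solution with $u\in C([0,T^*);H^{s+2})$, hence $u\in C([0,T^*);C^2)$ and $m\in C([0,T^*);C^0)$, and all the pointwise computations below are legitimate. Let $q(t,x_0)$ be the characteristic flow of~(\ref{character}), i.e.\ $\partial_t q=\sin(u^2-u_x^2)(t,q)$ with $q(0,x_0)=x_0$; standard ODE theory makes $q(t,\cdot)$, for each $t\in[0,T^*)$, an increasing $C^1$-diffeomorphism of $\mathbb{R}$ with $\partial_x q(t,x_0)=\exp\big(\int_0^t 2u_x m\cos(u^2-u_x^2)\,\mathrm{d}s\big)>0$.

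Next I would record the sign preservation of $m$. Expanding the flux in~(\ref{sin-mCH}) gives
\[
m_t+\sin(u^2-u_x^2)\,m_x=-2u_x\cos(u^2-u_x^2)\,m^2,
\]
so that, with $M$ as in~(\ref{M}), $\tfrac{\mathrm{d}}{\mathrm{d}t}\bar m(t)=-\bar M(t)\,\bar m(t)$ and hence $\bar m(t)=m_0(x_0)\exp(-\int_0^t\bar M)$. Combined with $\partial_x q>0$, the hypothesis $m_0\ge 0$ on $\mathbb{R}$ then gives $m(t,\cdot)\ge 0$ on $\mathbb{R}$ for all $t\in[0,T^*)$, while $\bar m(t)>0$ there because $\bar m(0)=m_0(x_0)>0$.

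The crux, and the step I expect to be the main obstacle, is a Riccati differential inequality for $\bar M$. Differentiating $M$ along $q(t,x_0)$, substituting~(\ref{sin-mCH}) and the convolution identities $u=p*m$, $u_x=(p_--p_+)*m$, $u_{xx}=p*m-m$, one should isolate a favourable negative quadratic term and bound every remaining nonlocal term using Young's inequality (with $\|p\|_{L^1}=\|p_\pm\|_{L^1}=1$), the conserved quantity $H_1=\int_{\mathbb{R}}mu\,\mathrm{d}x=\|u\|_{H^1}^2$, and the pointwise bound $0\le m$, arriving at $\bar M_t\le C_1\bar m-\bar M^2$ on $[0,T^*)$, with $C_1$ the constant fixed below~(\ref{M-2}). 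Making the nonlocal estimates tight enough to yield a clean quadratic majorant, with no extraneous lower-order term, is the delicate part.

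Finally I would close by comparison. Set $\varphi(t):=1/\bar m(t)$, which is $C^1$ and strictly positive on $[0,T^*)$. Then $\varphi'=-\bar m_t/\bar m^2=\bar M/\bar m$ and, using the Riccati inequality together with $\bar m>0$, $\varphi''=(\bar M_t+\bar M^2)/\bar m\le C_1$. Taylor's formula with integral remainder therefore gives, for all $t\in[0,T^*)$,
\[
0<\varphi(t)\le \frac{1}{\bar m(0)}+\frac{\bar M(0)}{\bar m(0)}\,t+\frac{1}{2}C_1 t^2=:Q(t).
\]
Because $\bar M(0)<0$ and $C_1,\bar m(0)>0$, the upward parabola $Q$ has vertex at $t=\xi=-\bar M(0)/(C_1\bar m(0))>0$, and hypothesis~(\ref{assumption}) is precisely the statement $Q(\xi)<0$. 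Since $Q(0)=1/\bar m(0)>0$, $Q$ has two positive roots $t_1<t_2$ with $0<t_1<\xi<t_2$ and $\{Q>0\}=(-\infty,t_1)\cup(t_2,+\infty)$. As $[0,T^*)$ is connected, contains $0$, and lies in $\{Q>0\}$ (because $0<\varphi\le Q$ there), we must have $T^*\le t_1<\xi$; together with $T^*>0$ from local existence, this gives $T^*\in(0,\xi)$. Moreover, letting $t\uparrow T^*$ forces $\varphi(t)\to 0^+$, i.e.\ $\bar m(t)\to+\infty$, so $\|m(t)\|_{L^\infty}\to\infty$ as $t\uparrow T^*$ — consistently with, and quantifying, the blow-up criterion of Theorem~\ref{thmBlowup}. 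This is the announced wave-breaking.
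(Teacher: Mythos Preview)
Your approach is the paper's: track $\bar m$ and $\bar M=\cos(u^{2}-u_{x}^{2})\,m u_{x}$ along the characteristics, exploit $m\ge 0$ (whence $|u_x|\le u$) together with the conserved $H^{1}$ norm to derive the Riccati inequality $\bar M'\le -2\bar M^{2}+C_{1}\bar m$, and close by showing that $1/\bar m$ is majorized by a quadratic that becomes negative before $t=\xi$. One arithmetic slip to fix: your own transport identity gives $\tfrac{d}{dt}\bar m=-2\bar M\,\bar m$, not $-\bar M\,\bar m$; with the correct factor (and the coefficient $-2\bar M^{2}$ in the Riccati) your Taylor bound $\varphi''\le C_{1}$ becomes $\varphi''\le 2C_{1}$, yielding precisely the paper's $h(t)=\tfrac{1}{\bar m(0)}+2\tfrac{\bar M(0)}{\bar m(0)}\,t+C_{1}t^{2}$, whose value at $t=\xi$ is still negative under~(\ref{assumption}).
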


The rest of this paper is organized as follows.
 In Section 2, we will briefly recall the
   properties of Besov spaces and some Lemmas
    on the transport equation theory.
Section 3 will deal with
the proof of Theorem \ref{thmLocal}.
 In Section 4, The Moser-type estimates in Sobolev spaces
 will be used to
prove Theorem \ref{thmBlowup}.
Section 5 will
provide the proof of the wave-breaking
Theorem \ref{waveBreaking} by trace the precise blow-up
quantity along the characteristic associated
to (\ref{sin-mCH}).

\section{Preliminaries}
Since the local well-posedness for the  Cauchy problem  (\ref{sin-mCH}) will be established in  Besov-type spaces, we firstly recall some basic properties about the Littlewood-Paley theory and some useful lemmas  of the transport equation  theory.

\subsection{Basics properties of the  Littlewood-Paley theory}

The definition of Besov spaces and some of their properties will be briefly exhibited in this subsection (see \cite{BahouriCheminDanchin2011,
Chemin2004CRM} for more details).

Let $B(x_{0},r)$ be the open ball centered at $x_{0}$ with
radius $r,$ $\mathcal{C}\equiv \{\xi\in \mathbb{R}^{d} | 4/3\leq|\xi|\leq 8/3\}$,
 and $\mathcal{\tilde{C}}\equiv B(0,2/3)+\mathcal{C}.$ Then there are two radial functions $\chi\in \mathcal{D}(B(0,4/3))$
 and $\varphi\in \mathcal{D}(\mathcal{C})$
 satisfying
\begin{equation*}
\left\{\begin{array}{l}
\chi(\xi)+\sum_{q \geq 0} \varphi(2^{-q} \xi)=1,\quad
1/3 \leq \chi^2(\xi)+\sum_{q \geq 0} \varphi^2(2^{-q} \xi) \leq 1 \quad (\forall \xi \in \mathbb{R}^{d}),\v\\
|q-q^{\prime}| \geq 2 \Rightarrow \operatorname{Supp} \varphi(2^{-q} \cdot) \cap \operatorname{Supp} \varphi(2^{-q^{\prime}} \cdot)=\varnothing, \v \\
q \geq 1 \Rightarrow \operatorname{Supp} \chi(\cdot) \cap \operatorname{Supp} \varphi(2^{-q^{\prime}} \cdot)=\varnothing,\quad
|q-q^{\prime}| \geq 5 \Rightarrow 2^{q^{\prime}} \widetilde{\mathcal{C}} \cap 2^{q} \mathcal{C}=\varnothing.
\end{array}\right.
\end{equation*}

The dyadic operators $\Delta_{q}$ and $S_{q}$ acting on $u(t,x)\in S'(\mathbb{R}^d)$ are defined as
\begin{equation*}
\begin{array}{l}
\Delta_{q} u=\left\{
 \begin{array}{ll}
   0, & q \leq-2, \v \\
 \chi(D) u=\int_{\mathbb{R}^{d}} \tilde{h}(y) u(x-y)dy, & q=-1, \v \\
 \varphi\left(2^{-q} D\right) u=2^{q d} \int_{\mathbb{R}^{d}} h\left(2^{q} y\right) u(x-y) dy, & q \geq 0,
 \end{array}\right. \v \\
S_{q} u=\sum_{q^{\prime} \leq q-1} \Delta_{q^{\prime}} u,
\end{array}
\end{equation*}
where
$h = \mathcal{F}^{-1} \varphi$ and $ \tilde{h} = \mathcal{F}^{-1} \chi$ with $\mathcal{F}^{-1}$ denoting the inverse Fourier transform.

The Besov spaces is
$B_{p, r}^{s}(\mathbb{R}^{d})=\left\{u \in S^{\prime}\, \big|\, \|u\|_{B_{p, r}^{s}(\mathbb{R}^{d})}=\big(\sum_{j \geq-1} 2^{r j s}\|\Delta_{j} u\|_{L^{p}(\mathbb{R}^{d})}^{r}\big)^{1/r}
<\infty\right\}.$
With the above-defined Besov spaces, we next recall some of their properties.

\begin{lem} {\rm (Embedding property)}\cite{BahouriCheminDanchin2011,Chemin2004CRM}\label{21lem1}
Suppose $1 \leq p_{1} \leq p_{2} \leq \infty$, $1 \leq r_{1} \leq r_{2} \leq \infty$ and $s$ be real.
Then it holds that $B_{p_{1}, r_{1}}^{s}(\mathbb{R}^{d}) \hookrightarrow B_{p_{2}, r_{2}}^{s-d(1/p_1-1/p_2)}
(\mathbb{R}^{d})$. If $s>d/p$ or $s=d/p,\, r=1,$ then there holds $B_{p, r}^{s}(\mathbb{R}^{d}) \hookrightarrow L^{\infty}(\mathbb{R}^{d})$.
\end{lem}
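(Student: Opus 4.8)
The plan is to derive both embeddings from \textbf{Bernstein's inequality}, the basic tool that measures how the $L^{p}$ size of a frequency-localized function changes with the integrability exponent. Recall its statement: if $f\in\mathcal{S}'$ has $\operatorname{Supp}\widehat{f}$ contained in a ball (or annulus) of size $\lambda$ and $1\le a\le b\le\infty$, then $\|f\|_{L^{b}}\lesssim \lambda^{d(1/a-1/b)}\|f\|_{L^{a}}$. Applied to the dyadic blocks $\Delta_{j}u$, whose spectrum lives in $2^{j}\mathcal{C}$ for $j\ge0$ (and in a fixed ball for $j=-1$), this yields the key per-block estimate $\|\Delta_{j}u\|_{L^{p_{2}}}\lesssim 2^{jd(1/p_{1}-1/p_{2})}\|\Delta_{j}u\|_{L^{p_{1}}}$ whenever $p_{1}\le p_{2}$.

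For the first embedding I would multiply this inequality by the weight $2^{j(s-d(1/p_{1}-1/p_{2}))}$, so that the factor $2^{jd(1/p_{1}-1/p_{2})}$ is absorbed and the right-hand side becomes exactly $2^{js}\|\Delta_{j}u\|_{L^{p_{1}}}$. Taking the $\ell^{r_{2}}$ norm in $j$ of the left-hand side then reproduces $\|u\|_{B^{s-d(1/p_{1}-1/p_{2})}_{p_{2},r_{2}}}$, while the monotonicity $\ell^{r_{1}}\hookrightarrow\ell^{r_{2}}$ (valid since $r_{1}\le r_{2}$) bounds the resulting $\ell^{r_{2}}$ sum by the $\ell^{r_{1}}$ sum $\|u\|_{B^{s}_{p_{1},r_{1}}}$. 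Chaining these two steps is exactly the claimed inclusion.

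For the embedding into $L^{\infty}$, I would start from the reconstruction $u=\sum_{j\ge-1}\Delta_{j}u$ and the triangle inequality $\|u\|_{L^{\infty}}\le\sum_{j\ge-1}\|\Delta_{j}u\|_{L^{\infty}}$. Bernstein with $b=\infty$, $a=p$ gives $\|\Delta_{j}u\|_{L^{\infty}}\lesssim 2^{jd/p}\|\Delta_{j}u\|_{L^{p}}$, hence
\[
\|u\|_{L^{\infty}}\lesssim\sum_{j\ge-1}2^{-j(s-d/p)}\bigl(2^{js}\|\Delta_{j}u\|_{L^{p}}\bigr).
\]
When $s=d/p$ and $r=1$ the exponential factor equals $1$ and the right-hand side is literally $\|u\|_{B^{d/p}_{p,1}}$; when $s>d/p$, I would apply H\"older in $j$ with the exponent $r'$ conjugate to $r$, the weight series $\sum_{j}2^{-jr'(s-d/p)}$ converging as a geometric series precisely because $s-d/p>0$, leaving $\|u\|_{B^{s}_{p,r}}$.

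The one point that deserves care---and the place where the endpoint hypotheses are genuinely used---is the \emph{mode of convergence}. A priori $\sum_{j}\Delta_{j}u$ converges only in $\mathcal{S}'$, so I must argue that it in fact converges in $L^{\infty}$: the bound above shows $\sum_{j}\|\Delta_{j}u\|_{L^{\infty}}<\infty$, so the partial sums are Cauchy in $L^{\infty}$ and their limit is a bounded (indeed continuous) function, which necessarily coincides with $u$ as a tempered distribution. This is also where the restriction $r=1$ is indispensable at the critical regularity $s=d/p$: it is the only exponent for which the unweighted series $\sum_{j}2^{js}\|\Delta_{j}u\|_{L^{p}}$ can be summed without the help of a decaying geometric factor.
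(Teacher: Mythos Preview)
Your argument is correct and is precisely the standard proof via Bernstein's inequality and the $\ell^{r_{1}}\hookrightarrow\ell^{r_{2}}$ embedding; this is the approach in the cited references \cite{BahouriCheminDanchin2011,Chemin2004CRM}. Note, however, that the paper itself does not supply a proof of this lemma: it is quoted as a known result from those sources, so there is no ``paper's own proof'' to compare against beyond the textbook derivation you have reproduced.
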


\begin{lem}{\rm (Interpolation)}\cite{BahouriCheminDanchin2011,Chemin2004CRM} \label{21lem2}
Let $s_1,\, s_2$  be real numbers with $s_{1}<s_{2}$ and $\theta \in(0,1).$
 Then  there exists a constant $C$ such that
\begin{eqnarray*}
\|u\|_{B_{p, r}^{\theta s_{1}+(1-\theta) s_{2}}}
\leq\|u\|_{B_{p, r}^{s_{1}}}^{\theta}\|u\|_{B_{p, r}^{s_{2}}}^{(1-\theta)},\quad
\|u\|_{B_{p, 1}^{\theta s_{1}+(1-\theta) s_{2}}}
\leq \frac{C}{s_{2}-s_{1}}\frac{1}{\theta (1-\theta)}
\|u\|_{B_{p, \infty}^{s_{1}}}^{\theta}
\|u\|_{B_{p, \infty}^{s_{2}}}^{(1-\theta)},
\end{eqnarray*}
where $(p, r) \in[1, \infty]^{2}$.
\end{lem}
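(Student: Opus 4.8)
The plan is to reduce both estimates to elementary manipulations of the nonnegative dyadic sequence $\left\{2^{js}\|\Delta_j u\|_{L^p}\right\}_{j\geq -1}$, where throughout I write $s=\theta s_1+(1-\theta)s_2$. The whole argument rests on the pointwise-in-$j$ factorization
\[
2^{js}\|\Delta_j u\|_{L^p}=\left(2^{js_1}\|\Delta_j u\|_{L^p}\right)^{\theta}\left(2^{js_2}\|\Delta_j u\|_{L^p}\right)^{1-\theta},
\]
which holds for every block simply because $s$ is the convex combination of $s_1$ and $s_2$, so that $2^{js}=(2^{js_1})^{\theta}(2^{js_2})^{1-\theta}$.

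For the first (constant-free) inequality I would set $a_j=2^{js_1}\|\Delta_j u\|_{L^p}$ and $b_j=2^{js_2}\|\Delta_j u\|_{L^p}$, so the factorization reads $2^{js}\|\Delta_j u\|_{L^p}=a_j^{\theta}b_j^{1-\theta}$. When $r<\infty$, raising to the power $r$ and summing over $j$ turns the desired bound into the discrete H\"older inequality for $\{a_j^{\theta r}\}$ and $\{b_j^{(1-\theta)r}\}$ with conjugate exponents $1/\theta$ and $1/(1-\theta)$; taking the $r$-th root gives $\|u\|_{B_{p,r}^{s}}\leq\|u\|_{B_{p,r}^{s_1}}^{\theta}\|u\|_{B_{p,r}^{s_2}}^{1-\theta}$. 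The endpoint $r=\infty$ is immediate from $\sup_j a_j^{\theta}b_j^{1-\theta}\leq\left(\sup_j a_j\right)^{\theta}\left(\sup_j b_j\right)^{1-\theta}$.

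The second inequality carries the quantitative constant, and extracting it cleanly is the step I expect to demand the most care. Writing $A=\|u\|_{B_{p,\infty}^{s_1}}$, $B=\|u\|_{B_{p,\infty}^{s_2}}$ and $\delta=s_2-s_1>0$, the defining suprema give $\|\Delta_j u\|_{L^p}\leq A\,2^{-js_1}$ and $\|\Delta_j u\|_{L^p}\leq B\,2^{-js_2}$, whence
\[
2^{js}\|\Delta_j u\|_{L^p}\leq\min\left(A\,2^{j(1-\theta)\delta},\;B\,2^{-j\theta\delta}\right).
\]
I would split $\|u\|_{B_{p,1}^{s}}=\sum_{j}2^{js}\|\Delta_j u\|_{L^p}$ at the crossover index $j_0$ with $2^{j_0\delta}=B/A$, keeping the first term of the minimum for $j\leq j_0$ and the second for $j>j_0$, and bound each tail by the corresponding geometric series over all of $\mathbb{Z}$ (legitimate since the terms are nonnegative and the true sum runs only over $j\geq -1$). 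Using $2^{j_0(1-\theta)\delta}=(B/A)^{1-\theta}$ and $2^{-j_0\theta\delta}=(A/B)^{\theta}$, both powers collapse into the single factor $A^{\theta}B^{1-\theta}$, and the two sums evaluate to $A^{\theta}B^{1-\theta}/\big(1-2^{-(1-\theta)\delta}\big)$ and $A^{\theta}B^{1-\theta}/\big(1-2^{-\theta\delta}\big)$.

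It then remains to turn the two geometric denominators into the stated prefactor. Invoking the elementary bound $1-2^{-x}\geq c\,x$ for $0<x\leq 1$ (and $1-2^{-x}\geq 1/2$ for $x\geq 1$) gives $\big(1-2^{-(1-\theta)\delta}\big)^{-1}\leq C/((1-\theta)\delta)$ and $\big(1-2^{-\theta\delta}\big)^{-1}\leq C/(\theta\delta)$, so that with the identity $\frac{1}{\theta}+\frac{1}{1-\theta}=\frac{1}{\theta(1-\theta)}$ one arrives at
\[
\|u\|_{B_{p,1}^{s}}\leq A^{\theta}B^{1-\theta}\left(\frac{1}{1-2^{-(1-\theta)\delta}}+\frac{1}{1-2^{-\theta\delta}}\right)\leq\frac{C}{s_2-s_1}\,\frac{1}{\theta(1-\theta)}\,\|u\|_{B_{p,\infty}^{s_1}}^{\theta}\|u\|_{B_{p,\infty}^{s_2}}^{1-\theta},
\]
which is exactly the claimed estimate. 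The only genuinely delicate point is tracking these denominators so that the singular factor $1/\big(\theta(1-\theta)(s_2-s_1)\big)$ emerges with the correct homogeneity; everything else is routine bookkeeping of geometric sums together with an application of H\"older's inequality.
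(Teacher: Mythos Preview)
The paper does not supply its own proof of this lemma; it is quoted verbatim from the cited references (Bahouri--Chemin--Danchin and Chemin). Your argument is exactly the standard one found there: H\"older's inequality in $\ell^r$ for the first estimate, and for the second a dyadic split at the balance index $2^{j_0\delta}=B/A$ followed by summation of the two geometric tails. So there is nothing to compare --- you have reproduced the textbook proof.

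One small caution in your last step: the inequality $(1-2^{-x})^{-1}\leq C/x$ that you invoke is only valid for $x$ in a bounded range; for $x\geq 1$ you merely get $(1-2^{-x})^{-1}\leq 2$, which is not dominated by $C/x$ when $x$ is large. Hence the passage from your (correct) constant $(1-2^{-(1-\theta)\delta})^{-1}+(1-2^{-\theta\delta})^{-1}$ to the stated form $\frac{C}{(s_2-s_1)\theta(1-\theta)}$ with a universal $C$ fails once $s_2-s_1$ is large --- indeed a single-frequency function already shows the stated constant cannot be universal in that regime. This is a slight imprecision in how the lemma is quoted rather than a defect in your argument: your intermediate bound is the sharp one, and it collapses to the displayed prefactor as soon as $s_2-s_1$ is bounded above, which is the only way the lemma is applied in the paper.
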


\begin{lem}{\rm (Product law)}\cite{BahouriCheminDanchin2011,Chemin2004CRM}\label{21lem3}
Let  $(p, r)\in [1, \infty]^{2}$
and $s$ be real. Then
$
\|u v\|_{B_{p, r}^{s}(\mathbb{R}^{d})} \leq C(\|u\|_{L^{\infty}(\mathbb{R}^{d})}\|v\|_{B_{p, r}^{s}(\mathbb{R}^{d})}
+\|u\|_{B_{p, r}^{s}(\mathbb{R}^{d})}
\|v\|_{L^{\infty}(\mathbb{R}^{d})}),
$
namely,  the space $L^{\infty}(\mathbb{R}^{d}) \cap B_{p, r}^{s}(\mathbb{R}^{d})$ is an algebra.
Moreover, if  $s>d/p$ or $s=d/p,\, r=1,$ then there holds
$
\|u v\|_{B_{p, r}^{s}\left(\mathbb{R}^{d}\right)} \leq C\|u\|_{B_{p, r}^{s}\left(\mathbb{R}^{d}\right)}\|v\|_{B_{p, r}^{s}\left(\mathbb{R}^{d}\right)}.
$
\end{lem}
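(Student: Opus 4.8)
The plan is to prove both inequalities via Bony's paraproduct decomposition, which splits the product into three pieces whose frequency supports are under control. Writing $S_{q-1}u=\sum_{q'\le q-2}\Delta_{q'}u$, I decompose
\[
uv = T_uv + T_vu + R(u,v),\qquad T_uv=\sum_{q}S_{q-1}u\,\Delta_q v,\quad R(u,v)=\sum_{q}\sum_{|q-q'|\le 1}\Delta_q u\,\Delta_{q'}v.
\]
By the symmetric roles of $u$ and $v$ it suffices to bound the paraproduct $T_uv$ and the remainder $R(u,v)$; the term $T_vu$ is handled exactly as $T_uv$ with $u,v$ interchanged.

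For the paraproduct $T_uv$ the key point is a spectral-support observation: each summand $S_{q-1}u\,\Delta_q v$ has Fourier transform supported in a fixed dilate $2^{q}\widetilde{\mathcal C}$ of the annulus, so the blocks are almost orthogonal and the $B^s_{p,r}$-norm can be recovered from them. Concretely, applying $\Delta_j$ retains only finitely many indices $q$ with $|q-j|$ bounded, and since $S_{q-1}$ is bounded on $L^\infty$ uniformly in $q$, Hölder's inequality gives $\|S_{q-1}u\,\Delta_q v\|_{L^p}\le \|u\|_{L^\infty}\|\Delta_q v\|_{L^p}$. Multiplying by $2^{js}$, taking the $\ell^r$-norm in $j$, and using $\|\Delta_q v\|_{L^p}\le c_q\,2^{-qs}\|v\|_{B^s_{p,r}}$ with $\{c_q\}\in\ell^r$ yields
\[
\|T_uv\|_{B^s_{p,r}} \le C\,\|u\|_{L^\infty}\,\|v\|_{B^s_{p,r}},
\]
and the interchanged bound controls $T_vu$.

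The remainder $R(u,v)$ is the main obstacle, because the summand $\Delta_q u\,\Delta_{q'}v$ with $|q-q'|\le 1$ now has Fourier support only in a ball $B(0,C2^{q})$ rather than an annulus, so the norm cannot be read off block-by-block. Grouping terms by their highest frequency, $R(u,v)=\sum_n R_n$ with $\operatorname{Supp}\widehat{R_n}\subset B(0,C2^{n})$, one sees that $\Delta_j R_n=0$ unless $n\ge j-N_0$ for a fixed $N_0$. Estimating $\|R_n\|_{L^p}\le C\,\|u\|_{L^\infty}\,d_n\,2^{-ns}\|v\|_{B^s_{p,r}}$ (again by Hölder and the definition of the Besov norm, with $\{d_n\}\in\ell^r$) and summing,
\[
2^{js}\|\Delta_j R(u,v)\|_{L^p}\le C\,\|u\|_{L^\infty}\,\|v\|_{B^s_{p,r}}\sum_{n\ge j-N_0}2^{(j-n)s}d_n .
\]
The inner sum is a discrete convolution of $\{d_n\}\in\ell^r$ against the sequence $2^{ks}\mathbf 1_{\{k\le N_0\}}$, which lies in $\ell^1$ precisely when $s>0$; Young's inequality then bounds its $\ell^r$-norm in $j$ by $C\|v\|_{B^s_{p,r}}$. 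This gives $\|R(u,v)\|_{B^s_{p,r}}\le C\|u\|_{L^\infty}\|v\|_{B^s_{p,r}}$, and combining the three estimates produces the first (tame) inequality; placing the $L^\infty$-norm on $v$ instead yields the symmetric form stated.

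Finally, the algebra property follows by feeding the embedding of Lemma \ref{21lem1} into the tame estimate: when $s>d/p$, or $s=d/p$ and $r=1$, one has $B^s_{p,r}\hookrightarrow L^\infty$, so $\|u\|_{L^\infty}\le C\|u\|_{B^s_{p,r}}$ and $\|v\|_{L^\infty}\le C\|v\|_{B^s_{p,r}}$. Substituting these into the first inequality collapses both terms into $C\|u\|_{B^s_{p,r}}\|v\|_{B^s_{p,r}}$, which is the asserted second inequality. The only delicate point throughout is the remainder term, where the ball-supported (rather than annulus-supported) blocks force the convolution/Young argument and the use of positive regularity to sum the low-frequency tail.
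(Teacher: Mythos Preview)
The paper does not give its own proof of this lemma; it is quoted verbatim from the cited references \cite{BahouriCheminDanchin2011,Chemin2004CRM} as a standard tool, with no argument supplied. Your proof via Bony's paraproduct decomposition is precisely the argument one finds in those references and is correct. You also correctly flag that the remainder estimate requires $s>0$ for the convolution kernel $2^{ks}\mathbf{1}_{\{k\le N_0\}}$ to lie in $\ell^1$; the lemma as stated in the paper (``$s$ be real'') is slightly imprecise on this point, but in every application within the paper one has $s>\max\{1/p,1/2\}>0$, so no harm is done.
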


\begin{lem}{\rm (Moser-type estimates)}\cite{BahouriCheminDanchin2011,Danchin2001DIE}         \label{21lem4}
Let $s>\max \{d/p,\, d/2\}$
and $(p, r)\in[1, \infty]^{2}$.
Then, for any $a \in B_{p, r}^{s-1}(\mathbb{R}^{d})$ and $b \in B_{p, r}^{s}(\mathbb{R}^{d}),$ there holds
$
\|a b\|_{B_{p, r}^{s-1}(\mathbb{R}^{d})} \leq C\|a\|_{B_{p, r}^{s-1}(\mathbb{R}^{d})}\|b\|_{B_{p, r}^{s}(\mathbb{R}^{d})}.
$
\end{lem}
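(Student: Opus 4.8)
The plan is to use Bony's paraproduct decomposition together with the standard continuity properties of the paraproduct and remainder operators, calibrating the two hypotheses $s>d/p$ and $s>d/2$ to the three resulting pieces. Write
$$
ab=T_a b+T_b a+R(a,b),\qquad T_a b=\sum_{j} S_{j-1}a\,\Delta_j b,\qquad R(a,b)=\sum_{|j-j'|\le 1}\Delta_j a\,\Delta_{j'} b,
$$
so that it suffices to bound each of the three terms in $B_{p,r}^{s-1}$ by $C\|a\|_{B_{p,r}^{s-1}}\|b\|_{B_{p,r}^{s}}$.

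For the paraproduct $T_b a$, whose low-frequency factor is built from $b$, I would invoke the standard bound $\|T_f g\|_{B_{p,r}^{\sigma}}\le C\|f\|_{L^\infty}\|g\|_{B_{p,r}^{\sigma}}$ with $\sigma=s-1$. Since $s>d/p$, Lemma \ref{21lem1} gives $B_{p,r}^{s}\hookrightarrow L^\infty$, whence $\|b\|_{L^\infty}\le C\|b\|_{B_{p,r}^{s}}$ and therefore $\|T_b a\|_{B_{p,r}^{s-1}}\le C\|b\|_{B_{p,r}^{s}}\|a\|_{B_{p,r}^{s-1}}$. For the paraproduct $T_a b$, whose low-frequency factor comes from the less regular $a$, I would instead use the negative-index paraproduct estimate $\|T_f g\|_{B_{p,r}^{\sigma+t}}\le C\|f\|_{B_{\infty,\infty}^{t}}\|g\|_{B_{p,r}^{\sigma}}$ with $t=-1<0$ and $\sigma=s$, so that $\sigma+t=s-1$. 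Because $s>d/p$, Lemma \ref{21lem1} yields the chain $B_{p,r}^{s-1}\hookrightarrow B_{\infty,\infty}^{s-1-d/p}\hookrightarrow B_{\infty,\infty}^{-1}$, giving $\|a\|_{B_{\infty,\infty}^{-1}}\le C\|a\|_{B_{p,r}^{s-1}}$ and hence $\|T_a b\|_{B_{p,r}^{s-1}}\le C\|a\|_{B_{p,r}^{s-1}}\|b\|_{B_{p,r}^{s}}$.

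The remainder $R(a,b)$ is the delicate piece, and is where I expect the main difficulty, since the naive choice of placing $b$ in $L^\infty$ would force the too-strong restriction $2s-1-d/p>0$. The cleaner route is to keep both factors in $L^{p}$: the remainder operator is continuous from $B_{p,r}^{s-1}\times B_{p,\infty}^{s}$ into $B_{p/2,r}^{(s-1)+s}=B_{p/2,r}^{2s-1}$ as soon as the total regularity is positive, i.e. $2s-1>0$, and this positivity is furnished exactly by $s>d/2\ge 1/2$. I would then descend back to the target index in two steps: first the integrability embedding $B_{p/2,r}^{2s-1}\hookrightarrow B_{p,r}^{2s-1-d/p}$ of Lemma \ref{21lem1} (raising the integrability exponent from $p/2$ to $p$ costs $d/p$ derivatives), then the elementary regularity embedding $B_{p,r}^{2s-1-d/p}\hookrightarrow B_{p,r}^{s-1}$, which is valid precisely because $s\ge d/p$ forces $2s-1-d/p\ge s-1$. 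Combining gives $\|R(a,b)\|_{B_{p,r}^{s-1}}\le C\|a\|_{B_{p,r}^{s-1}}\|b\|_{B_{p,r}^{s}}$, and summing the three bounds yields the claim.

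The only genuinely quantitative point to verify is the convergence of the dyadic series defining $R(a,b)$ in $B_{p/2,r}^{2s-1}$: since each block $\Delta_j a\,\Delta_{j'}b$ with $|j-j'|\le 1$ has frequency support in a ball of radius $\sim 2^{j}$, this reduces to an $\ell^r$ convolution inequality controlled by $\sum_{k\ge -N_0}2^{-k(2s-1)}<\infty$, the concrete manifestation of $s>d/2$, while $s>d/p$ enters only through the three embeddings above. I would finally remark that the plain product law of Lemma \ref{21lem3} does not suffice here, as it would require $a\in L^\infty$, i.e. $s>d/p+1$, strictly stronger than the assumed $s>\max\{d/p,d/2\}$; it is exactly the asymmetry of the Moser estimate — with $b$ one derivative smoother than $a$ — that removes this extra derivative.
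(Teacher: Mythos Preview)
The paper does not supply its own proof of this lemma; it is quoted as a known background estimate with citations to \cite{BahouriCheminDanchin2011,Danchin2001DIE}, so there is no in-paper argument to compare against. Your route via Bony's paraproduct decomposition $ab=T_ab+T_ba+R(a,b)$ is exactly the standard one used in those references, and your handling of the two paraproducts and of the remainder correctly isolates why the two thresholds $s>d/2$ (to make $2s-1>0$ for the remainder) and $s>d/p$ (for the embeddings $B^{s}_{p,r}\hookrightarrow L^\infty$ and $B^{2s-1-d/p}_{p,r}\hookrightarrow B^{s-1}_{p,r}$) both enter.

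One small technical caveat: the remainder estimate you invoke, $R:B^{s-1}_{p,r}\times B^{s}_{p,\infty}\to B^{2s-1}_{p/2,r}$, is stated in \cite{BahouriCheminDanchin2011} under the restriction $1/p+1/p\le 1$, i.e.\ $p\ge 2$; for $1\le p<2$ the intermediate space $B^{2s-1}_{p/2,r}$ has integrability index below $1$ and lies outside the framework of Lemma~\ref{21lem1} as stated. The fix is routine: when $p<2$ one has $d/p>d/2$, so the binding hypothesis is $s>d/p$; embed $a\in B^{s-1}_{p,r}\hookrightarrow B^{s-1-d(1/p-1/2)}_{2,r}$ and $b\in B^{s}_{p,r}\hookrightarrow B^{s-d(1/p-1/2)}_{2,\infty}$, run the remainder into $B^{2s-1-2d(1/p-1/2)}_{1,r}$ (the positivity of the index now following from $s>d/p$ and $d\ge 1$), and re-embed into $B^{s-1}_{p,r}$. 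With this adjustment your argument is complete and matches the proofs in the cited sources.
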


The following  Lemma is useful  for proving the blow-up criterion.
\begin{lem}{\rm (Moser-type estimates)}\cite{GuiLiu2010JFA,
GuiLiuOlverQu2013CMP}  \label{21lem5} Let $s \geq 0.$ Then one has
$$
\begin{aligned}
\|f g\|_{H^{s}(\mathbb{R})}
& \leq C(\|f\|_{H^{s}(\mathbb{R})}\|g\|_{L^{\infty}(\mathbb{R})}
+\|f\|_{L^{\infty}(\mathbb{R})}\|g\|_{H^{s}(\mathbb{R})}),\v \\
\|f \partial_{x} g\|_{H^{s}(\mathbb{R})}
 & \leq C(\|f\|_{H^{s+1}(\mathbb{R})}\|g\|_{L^{\infty}(\mathbb{R})}
 +\|f\|_{L^{\infty}(\mathbb{R})}\|\partial_{x}g\|_{H^{s}(\mathbb{R})}),
\end{aligned}
$$
where $C$'s are constants independent of $f$ and $g$.
\end{lem}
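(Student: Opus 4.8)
The plan is to treat the two inequalities separately, using the Littlewood--Paley machinery of Section 2. The first inequality is immediate. Since on $\mathbb{R}$ one has the identification $H^{s}(\mathbb{R})=B_{2,2}^{s}(\mathbb{R})$ with equivalent norms, the estimate
$$\|fg\|_{H^{s}}\le C\big(\|f\|_{H^{s}}\|g\|_{L^{\infty}}+\|f\|_{L^{\infty}}\|g\|_{H^{s}}\big)$$
is nothing but the first assertion of the Product law (Lemma \ref{21lem3}) specialized to $p=r=2$. That lemma is stated for every real $s$, so in particular it covers the range $s\ge 0$ claimed here.

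For the second inequality the key point is that a naive application of Lemma \ref{21lem3} to the product $f\cdot(\partial_{x}g)$ would cost $\|\partial_{x}g\|_{L^{\infty}}$, whereas we are only permitted $\|g\|_{L^{\infty}}$; the remedy is to redistribute the derivative onto $f$ by means of Bony's paraproduct decomposition. With the dyadic blocks $\Delta_{q},S_{q}$ of Section 2, I would set
$$T_{a}b=\sum_{q}S_{q-1}a\,\Delta_{q}b,\qquad R(a,b)=\sum_{|q-q'|\le 1}\Delta_{q}a\,\Delta_{q'}b,$$
so that $ab=T_{a}b+T_{b}a+R(a,b)$, and split $f\,\partial_{x}g=T_{f}(\partial_{x}g)+T_{\partial_{x}g}f+R(f,\partial_{x}g)$. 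First I would estimate the paraproduct $T_{f}(\partial_{x}g)$, in which $f$ sits at low frequency, by the standard bound $\|T_{a}b\|_{B_{2,2}^{s}}\le C\|a\|_{L^{\infty}}\|b\|_{B_{2,2}^{s}}$ (valid for all $s$), giving $\|T_{f}(\partial_{x}g)\|_{H^{s}}\le C\|f\|_{L^{\infty}}\|\partial_{x}g\|_{H^{s}}$, which matches the second term on the right-hand side. For the remaining two pieces $f$ sits at high frequency, so I would instead let the derivative act on the low-frequency factor: using $L^{\infty}(\mathbb{R})\hookrightarrow B_{\infty,\infty}^{0}(\mathbb{R})$ together with $\|\partial_{x}g\|_{B_{\infty,\infty}^{-1}}\le C\|g\|_{B_{\infty,\infty}^{0}}\le C\|g\|_{L^{\infty}}$, the negative-order paraproduct estimate $\|T_{a}b\|_{B_{2,2}^{\sigma+t}}\le C\|a\|_{B_{\infty,\infty}^{t}}\|b\|_{B_{2,2}^{\sigma}}$ for $t<0$ (with $t=-1,\ \sigma=s+1$) yields $\|T_{\partial_{x}g}f\|_{H^{s}}\le C\|g\|_{L^{\infty}}\|f\|_{H^{s+1}}$. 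For the remainder I would apply $\|R(a,b)\|_{B_{2,2}^{s_{1}+s_{2}}}\le C\|a\|_{B_{2,2}^{s_{1}}}\|b\|_{B_{\infty,\infty}^{s_{2}}}$, valid whenever $s_{1}+s_{2}>0$, with $s_{1}=s+1$ and $s_{2}=-1$, obtaining $\|R(f,\partial_{x}g)\|_{H^{s}}\le C\|f\|_{H^{s+1}}\|\partial_{x}g\|_{B_{\infty,\infty}^{-1}}\le C\|f\|_{H^{s+1}}\|g\|_{L^{\infty}}$. Summing the three bounds gives the claim for $s>0$.

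The one delicate point, and the only place where a separate argument is needed, is the endpoint $s=0$: there the remainder estimate above fails, since it requires the strictly positive total regularity $s_{1}+s_{2}=s>0$. At $s=0$, however, the inequality is trivial, for by H\"older $\|f\,\partial_{x}g\|_{L^{2}}\le\|f\|_{L^{\infty}}\|\partial_{x}g\|_{L^{2}}=\|f\|_{L^{\infty}}\|\partial_{x}g\|_{H^{0}}$, which is already dominated by the right-hand side. Thus the main obstacle is not analytic depth but correct bookkeeping: choosing, term by term in Bony's decomposition, which factor carries the derivative, so that $f$ is measured in $H^{s+1}$ exactly when it appears at high frequency while $g$ is only ever measured in $L^{\infty}$, together with the separate handling of the $s=0$ borderline.
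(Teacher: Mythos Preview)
The paper does not supply its own proof of this lemma; it simply quotes the result from \cite{GuiLiu2010JFA,GuiLiuOlverQu2013CMP}. Your argument is a correct and standard derivation: the first inequality is indeed Lemma~\ref{21lem3} specialized to $p=r=2$, and for the second inequality your Bony decomposition with the derivative shifted onto the high-frequency factor, together with the separate elementary treatment of $s=0$, is exactly the usual route. There is nothing to compare against here, and no gap in what you wrote.
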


\begin{lem}\cite{BahouriCheminDanchin2011}\label{besovProperty}
 Let $s \in \mathbb{R}$ and $1 \leq p, r \leq \infty$.
Then the Besov spaces have the following properties:
\begin{itemize}
\item{}$B_{p, r}^{s}(\mathbb{R}^{d})$ is a Banach space and continuously embedding into $\mathcal{S}^{\prime}(\mathbb{R}^{d}),$ where $\mathcal{S}^{\prime}(\mathbb{R}^{d})$ is the dual space of the Schwartz space $\mathcal{S}(\mathbb{R}^{d})$;

\item{}If $p, r<\infty,$ then $\mathcal{S}(\mathbb{R}^{d})$ is dense in $B_{p, r}^{s}(\mathbb{R}^{d})$;

\item {} If $u_{n}$ is a bounded sequence of $B_{p, r}^{s}(\mathbb{R}^{d}),$ then an element $u \in B_{p, r}^{s}(\mathbb{R}^{d})$ and a subsequence $u_{n_{k}}$ exist such that
$\lim _{k \rightarrow \infty} u_{n_{k}}=u \text { in } \mathcal{S}^{\prime}(\mathbb{R}^{d}) \text { and }\|u\|_{B_{p, r}^{s}(\mathbb{R}^{d})} \leq C \liminf _{k \rightarrow \infty}\|u_{n_{k}}\|_{B_{p, r}^{s}(\mathbb{R}^{d})}.$
\end{itemize}
\end{lem}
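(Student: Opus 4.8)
The plan is to establish the three listed properties in turn, relying only on the dyadic blocks $\Delta_j$, $S_j$ defined above together with the Bernstein inequalities. For the continuous embedding $B_{p,r}^{s}(\mathbb{R}^d)\hookrightarrow\mathcal{S}'(\mathbb{R}^d)$, I would start from the identity $u=\sum_{j\ge-1}\Delta_j u$, convergent in $\mathcal{S}'$, and, for a test function $\phi\in\mathcal{S}$, write $\langle u,\phi\rangle=\sum_{j}\langle\Delta_j u,\widetilde\Delta_j\phi\rangle$, where $\widetilde\Delta_j$ is a fattened dyadic multiplier acting as the identity on the frequency support of $\Delta_j$. Hölder's inequality bounds each summand by $\|\Delta_j u\|_{L^p}\|\widetilde\Delta_j\phi\|_{L^{p'}}\le 2^{-js}\|u\|_{B_{p,r}^{s}}\|\widetilde\Delta_j\phi\|_{L^{p'}}$; since $\phi$ is Schwartz its high-frequency blocks decay faster than any power of $2^{j}$, which dominates the growth of $2^{-js}$ and renders the series summable and controlled by $\|u\|_{B_{p,r}^{s}}$ times a finite combination of Schwartz seminorms of $\phi$. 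Completeness then follows by a Fatou-type argument: a Cauchy sequence $\{u_n\}$ in $B_{p,r}^{s}$ is Cauchy in $\mathcal{S}'$, hence $u_n\to u$ in $\mathcal{S}'$; for each fixed $j$, $\{\Delta_j u_n\}$ is Cauchy in $L^p$ because $\|\Delta_j(u_n-u_m)\|_{L^p}\le 2^{-js}\|u_n-u_m\|_{B_{p,r}^{s}}$, and its $\mathcal{S}'$-limit must be $\Delta_j u$, so $\Delta_j u\in L^p$; lower semicontinuity of the $\ell^r(L^p)$ norm then gives $u\in B_{p,r}^{s}$ and $\|u_n-u\|_{B_{p,r}^{s}}\to0$.

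For density of $\mathcal{S}$ when $p,r<\infty$ I would proceed in two steps. First, frequency truncation: since $r<\infty$, the difference $\|u-S_N u\|_{B_{p,r}^{s}}$ equals, up to finitely many boundary blocks, the $\ell^r$-tail $\big(\sum_{j\ge N-1}2^{rjs}\|\Delta_j u\|_{L^p}^r\big)^{1/r}$, which tends to $0$ as $N\to\infty$; hence $S_N u\to u$ in $B_{p,r}^{s}$. Second, each $S_N u$ is band-limited but need not lie in $\mathcal{S}$, so I would approximate it within $\mathcal{S}$: as $\mathcal{S}$ is dense in $L^p$ for $p<\infty$, choose $\phi_k\in\mathcal{S}$ with $\phi_k\to S_N u$ in $L^p$, and apply a fixed smooth compactly supported frequency multiplier $\widetilde S_N$ equal to $1$ on the support of $\widehat{S_N u}$; then $\widetilde S_N\phi_k\in\mathcal{S}$ remains band-limited, converges to $\widetilde S_N S_N u=S_N u$ in $L^p$, and for functions with Fourier support in a fixed ball Bernstein's inequalities give $\|\cdot\|_{B_{p,r}^{s}}\lesssim_N\|\cdot\|_{L^p}$, upgrading the convergence to $B_{p,r}^{s}$. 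A diagonal argument in $N$ and $k$ then yields Schwartz approximations of $u$.

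For the third property, a bounded sequence $\{u_n\}$ is, through the embedding into $\mathcal{S}'$, equicontinuous on the separable space $\mathcal{S}$, so a weak-$*$ compactness (Banach--Alaoglu together with a diagonal extraction) argument produces a subsequence $u_{n_k}\to u$ in $\mathcal{S}'$. For each $j$ the family $\{\Delta_j u_{n_k}\}_k$ is bounded in $L^p$ and converges to $\Delta_j u$ in $\mathcal{S}'$; testing against $\phi\in\mathcal{S}$ with $\|\phi\|_{L^{p'}}\le1$ and passing to the limit gives $\|\Delta_j u\|_{L^p}\le\liminf_k\|\Delta_j u_{n_k}\|_{L^p}$, valid for every $p\in[1,\infty]$ by the duality characterization of the $L^p$ norm. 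Applying Fatou's lemma to the counting-measure $\ell^r$ sum then yields $\|u\|_{B_{p,r}^{s}}\le\liminf_k\|u_{n_k}\|_{B_{p,r}^{s}}$, so $u\in B_{p,r}^{s}$. I expect the density statement to be the main obstacle: the truncations $S_N u$ are band-limited but not Schwartz, so the second approximation step — reconciling the $L^p$ convergence of Schwartz functions with convergence in the Besov norm via the band-limited Bernstein equivalence $\|\cdot\|_{B_{p,r}^{s}}\lesssim_N\|\cdot\|_{L^p}$ — is the delicate point, whereas the embedding and the various lower-semicontinuity (Fatou) arguments are comparatively routine once the Bernstein inequalities are in hand.
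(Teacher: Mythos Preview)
The paper does not supply its own proof of this lemma: it is stated as a citation from \cite{BahouriCheminDanchin2011} and used as a black box in the well-posedness argument. Your sketch is therefore not competing with anything in the paper itself; rather, you are reconstructing the standard proof that the paper defers to the reference. The outline you give---dyadic pairing plus rapid decay of $\widetilde\Delta_j\phi$ for the embedding, a Fatou/lower-semicontinuity argument for completeness, the two-step truncation-then-$L^p$-approximation for density, and Banach--Alaoglu with blockwise Fatou for the compactness statement---matches the approach in Bahouri--Chemin--Danchin and is correct. The one point worth tightening is the density step: rather than approximating $S_N u$ by generic Schwartz functions in $L^p$ and then re-band-limiting, it is slightly cleaner (and is what the cited reference does) to mollify and cut off $S_N u$ directly, since $S_N u$ is already smooth and one only needs to enforce spatial decay; but your route works as well.
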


\subsection{Some lemmas in the theory of the transport equation}

We recall some a priori estimates\cite{BahouriCheminDanchin2011,
Danchin2001DIE} for the following transport equation
\begin{equation}\label{transport}
\left\{\begin{array}{l}
f_{t}+v\cdot \nabla f=g, \v \\
\left.f\right|_{t=0}=f_{0}.
\end{array}\right.
\end{equation}

\begin{lem}\cite{BahouriCheminDanchin2011,
Danchin2001DIE}\label{22lem1} Let $1 \leq p \leq p_{1} \leq \infty,$ $1 \leq r \leq \infty$
and $s \geq -d \min (1/p_1,\, 1-1/p)$.
Let  $f_{0} \in B_{p, r}^{s}(\mathbb{R}^{d})$.
$g \in L^{1}([0, T] ; B_{p, r}^{s}(\mathbb{R}^{d}))$ and $\nabla v \in L^{1}([0, T] ; B_{p, r}^{s}(\mathbb{R}^{d}) \cap L^{\infty}(\mathbb{R}^{d})),$ then there exists a unique solution
 $f \in L^{\infty}([0, T] ; B_{p, r}^{s}(\mathbb{R}^{d}))$  to Eq.~(\ref{transport})  satisfying:
\begin{eqnarray}
\|f\|_{B_{p, r}^{s}(\mathbb{R}^{d})}
 \leq\|f_{0}\|_{B_{p, r}^{s}
(\mathbb{R}^{d})}
+\int_{0}^{t}[\|g(t^{\prime})\|_{B_{p, r}^{s}
(\mathbb{R}^{d})}
+C V_{p_{1}}(t^{\prime})\|f(t^{\prime})\|_{B_{p, r}^{s}
(\mathbb{R}^{d})}] d t^{\prime}, \label{priori1}\\
\|f\|_{B_{p, r}^{s}(\mathbb{R}^{d})}
\leq\left[\|f_{0}\|_{B_{p, r}^{s}
(\mathbb{R}^{d})}+\int_{0}^{t}\|g(t^{\prime})\|_{B_{p, r}^{s}
(\mathbb{R}^{d})}e^{-C V_{p_{1}}(t^{\prime})} d t^{\prime}\right]e^{C V_{p_1}(t)},\label{priori2}
\end{eqnarray}
where $V_{p_{1}}(t)=\int_{0}^{t}\|\nabla v\|_{B_{p_{1}, \infty}^{d/p_{1}}(\mathbb{R}^{d}) \cap L^{\infty}(\mathbb{R}^{d})} d t^{\prime}$ if $s<1+d/p_{1}, V_{p_{1}}(t)=\int_{0}^{t}\|\nabla v\|_{B_{p_{1}, r}^{s-1}\left(\mathbb{R}^{d}\right)} d t^{\prime}$
if $s>1+d/p_{1}$ or $s=1+d/p_{1},\, r=1,$ and $C$ is a constant depending only on $s, p, p_{1}$, and $r$.
\end{lem}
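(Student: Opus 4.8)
The plan is to localize the transport equation (\ref{transport}) in frequency with the Littlewood--Paley decomposition, run an $L^p$ energy estimate on each dyadic block, and then recover the Besov bounds (\ref{priori1})--(\ref{priori2}) by $\ell^r$-summation and Grönwall. First I would apply the block operator $\Delta_q$ to (\ref{transport}). Writing $f_q:=\Delta_q f$, the block solves the localized inhomogeneous transport equation
\[
\partial_t f_q + v\cdot\nabla f_q = \Delta_q g + R_q, \qquad R_q := v\cdot\nabla\Delta_q f - \Delta_q(v\cdot\nabla f),
\]
where $R_q=[v\cdot\nabla,\Delta_q]f$ is the commutator. Multiplying by $|f_q|^{p-2}f_q$, integrating over $\mathbb{R}^d$, and integrating the transport term by parts converts $v\cdot\nabla f_q$ into a contribution governed by $\mathrm{div}\,v$, so that
\[
\frac{d}{dt}\|f_q\|_{L^p} \le \|\Delta_q g\|_{L^p} + \|R_q\|_{L^p} + C\|\nabla v\|_{L^\infty}\|f_q\|_{L^p},
\]
and integrating in time gives a pointwise-in-$q$ bound for $\|f_q(t)\|_{L^p}$; the endpoint $p=\infty$ is handled by a routine $L^\infty$ variant.

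The crux of the argument --- and the step I expect to be the main obstacle --- is the commutator estimate controlling $2^{qs}\|R_q\|_{L^p}$ in $\ell^r$. The standard route is Bony's paraproduct decomposition of $v\cdot\nabla f$ into the two paraproducts and the remainder; commuting $\Delta_q$ through each piece and exploiting the frequency-support localization together with Bernstein's inequalities and Lemma \ref{21lem1} yields
\[
\Big\|\big(2^{qs}\|R_q\|_{L^p}\big)_{q}\Big\|_{\ell^r} \le C\,\|\nabla v\|_{\ast}\,\|f\|_{B^s_{p,r}},
\]
where $\|\nabla v\|_{\ast}$ is exactly the norm appearing in the definition of $V_{p_1}(t)$. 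The dichotomy in that definition --- $B^{d/p_1}_{p_1,\infty}\cap L^\infty$ when $s<1+d/p_1$ versus $B^{s-1}_{p_1,r}$ when $s>1+d/p_1$ (or $s=1+d/p_1,\,r=1$) --- and the lower restriction $s\ge -d\min(1/p_1,1-1/p)$ both emerge from tracking which factor in each paraproduct/remainder term carries the derivative loss and from the convolution inequalities needed to re-sum the series. This is the genuinely technical part; I would either invoke it directly from \cite{BahouriCheminDanchin2011,Danchin2001DIE} or reproduce the three paraproduct estimates case by case.

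With the commutator estimate in hand, I multiply the time-integrated per-block bound by $2^{qs}$, take the $\ell^r$ norm in $q$, and use Minkowski's inequality to move the norm inside the time integral. Recognizing $\big(2^{qs}\|\Delta_q g\|_{L^p}\big)_q$ in $\ell^r$ as $\|g\|_{B^s_{p,r}}$ and controlling the $R_q$ terms by the coefficient $\|\nabla v(t')\|_{\ast}$, whose time integral is $V_{p_1}(t)$, yields precisely the integral inequality (\ref{priori1}). Estimate (\ref{priori2}) then follows by Grönwall's lemma applied to (\ref{priori1}): the factor $e^{CV_{p_1}(t)}$ is exactly the exponential of the integrated coefficient.

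Finally, for well-posedness: uniqueness follows from (\ref{priori1}) applied to the difference $f=f^{(1)}-f^{(2)}$ of two solutions, which solves (\ref{transport}) with $g=0$ and $f_0=0$; (\ref{priori1}) then forces $\|f(t)\|_{B^s_{p,r}}\le C\int_0^t\|\nabla v(t')\|_{\ast}\|f(t')\|_{B^s_{p,r}}\,dt'$, so $f\equiv 0$ by Grönwall. For existence I would regularize: mollify $f_0$, $g$ and $v$ by Friedrichs mollifiers $(J_n)$ to obtain smooth data and a smooth velocity field, solve the regularized problems classically by the method of characteristics, and observe that the approximations $f^n$ satisfy the uniform bound (\ref{priori2}). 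The Fatou-type compactness in the last item of Lemma \ref{besovProperty} then extracts a limit $f\in L^\infty([0,T];B^s_{p,r})$ solving (\ref{transport}) and inheriting the bounds. The endpoint $r=\infty$ is treated by weak-$\ast$ convergence, and the stated time regularity is recovered in the usual way --- strong continuity for $r<\infty$ and weak continuity for $r=\infty$ --- consistent with the solution class in the statement.
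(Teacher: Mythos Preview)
Your outline is correct and is essentially the standard argument from \cite{BahouriCheminDanchin2011,Danchin2001DIE}: localize with $\Delta_q$, run the $L^p$ energy estimate on each block, control the commutator $[v\cdot\nabla,\Delta_q]f$ via Bony's paraproduct decomposition (this is where the range of $s$ and the dichotomy in $V_{p_1}$ come from), sum in $\ell^r$, and apply Gr\"onwall. The paper itself does not supply a proof of this lemma; it is quoted as a known a~priori estimate from the cited references, so there is nothing in the paper to compare against beyond the statement.
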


\begin{lem}\cite{BahouriCheminDanchin2011}\label{22lem4} Let $s \geq -d \min (1/p_1, 1-1/p).$
Let $f_{0} \in B_{p, r}^{s}(\mathbb{R}^{d})$, $g \in L^{1}([0, T] ; B_{p, r}^{s}(\mathbb{R}^{d}))$  and $v \in L^{\rho}([0, T]; B_{\infty, \infty}^{-M}(\mathbb{R}^{d}))$  for some $\rho>1$ and $M>0$  be a time-dependent vector field satisfying
$$
\nabla v\in\left\{\begin{array}{ll}
L^{1}([0, T] ; B_{p_{1}, \infty}^{d/p}(\mathbb{R}^{d})), &  {\rm if }\,\, s<1+d/p_{1}, \v \\
L^{1}\left([0, T] ; B_{p_{1}, \infty}^{s-1}\left(\mathbb{R}^{d}\right)\right), & {\rm if }\,\, s>1+d/p_1\,\,  {\rm or } \,\, s=1+d/p_1 \,\, {\rm and } \,\, r=1.
\end{array}\right.
$$
Then, Eq.~(\ref{transport}) has a unique solution $f\in \mathcal{C}([0, T] ; B_{p, r}^{s}(\mathbb{R}^{d}))$ for $r<\infty$, or
$f\in (\bigcap_{s^{\prime}<s} \mathcal{C}([0, T]; B_{p, \infty}^{s^{\prime}}(\mathbb{R}^{d}))) \cap \mathcal{C}_{w}([0, T] ; B_{p, \infty}^{s}(\mathbb{R}^{d})))$ for $r=\infty$. Furthermore, the inequalities
(\ref{priori1})-(\ref{priori2}) hold.
\end{lem}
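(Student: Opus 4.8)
The plan is to treat the a priori estimates (\ref{priori1})--(\ref{priori2}) supplied by Lemma \ref{22lem1} as the engine and to build existence, uniqueness and the asserted time-regularity on top of them. Uniqueness is the cheapest ingredient: if $f_1,f_2$ are two solutions in $L^{\infty}([0,T];B_{p,r}^{s})$ issuing from the same data, then by linearity their difference $f=f_1-f_2$ solves (\ref{transport}) with $g\equiv 0$ and $f_0\equiv 0$; since the hypotheses imposed on $\nabla v$ are exactly those required by Lemma \ref{22lem1}, estimate (\ref{priori2}) applies and gives $\|f(t)\|_{B_{p,r}^{s}}\le 0$, whence $f\equiv 0$.

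For existence I would run a smoothing-and-compactness scheme. First I would regularize the data and coefficients, e.g.\ by setting $v^{(n)}=S_n v$, $g^{(n)}=S_n g$ and $f_0^{(n)}=S_n f_0$, so that the velocity field $v^{(n)}$ is smooth and globally Lipschitz in $x$; the regularized problem
\[
\partial_t f^{(n)}+v^{(n)}\cdot\nabla f^{(n)}=g^{(n)},\qquad f^{(n)}|_{t=0}=f_0^{(n)},
\]
then possesses a unique smooth solution, obtained by integration along the now well-defined characteristic flow. Because the truncations $S_n$ are uniformly bounded on the relevant Besov norms, the smoothed coefficients satisfy the assumptions of Lemma \ref{22lem1} with constants independent of $n$, so (\ref{priori1})--(\ref{priori2}) furnish a bound for $(f^{(n)})_n$ in $L^{\infty}([0,T];B_{p,r}^{s})$ uniform in $n$. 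The third item of Lemma \ref{besovProperty} then lets me extract a subsequence converging in $\mathcal{S}'$ to some $f\in L^{\infty}([0,T];B_{p,r}^{s})$.

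The delicate point is to pass to the limit in the transport term $v^{(n)}\cdot\nabla f^{(n)}$, for which weak-$*$ convergence of $(f^{(n)})$ alone is insufficient. I would remedy this by reading off from the regularized equation a uniform-in-$n$ modulus of continuity in time with values in the weaker space $B_{p,r}^{s-1}$: once the transport term is controlled at regularity $s-1$ via the structural assumptions on $\nabla v$ together with the product and Moser estimates of Lemmas \ref{21lem3}--\ref{21lem4}, the identity $\partial_t f^{(n)}=g^{(n)}-v^{(n)}\cdot\nabla f^{(n)}$ yields a uniform bound in $L^{1}([0,T];B_{p,r}^{s-1})$. Combining this equicontinuity with the local compactness of the embedding $B_{p,r}^{s}\hookrightarrow B_{p,r}^{s'}$ for $s'<s$ (an Aubin--Lions/Ascoli argument) upgrades the convergence to strong convergence in $\mathcal{C}([0,T];B_{p,r}^{s'})$ on every bounded set in $x$, which is enough to identify the limit of the transport term and conclude that $f$ solves (\ref{transport}). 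The inequalities (\ref{priori1})--(\ref{priori2}) then hold for $f$, either by lower semicontinuity of the Besov norm or by applying Lemma \ref{22lem1} directly to the limit solution.

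It remains to pin down the exact time-regularity. When $r<\infty$, the relation $\partial_t f=g-v\cdot\nabla f\in L^{1}([0,T];B_{p,r}^{s-1})$ already gives $f\in\mathcal{C}([0,T];B_{p,r}^{s-1})$, and in particular each localized block $t\mapsto\Delta_q f(t)$ is continuous in $L^{p}$. To promote this to continuity in $B_{p,r}^{s}$ I would split the Besov norm of $f(t)-f(t')$ into a finite sum over $q\le N$, which is continuous since each block is, and a tail $\sum_{q>N}$, which is uniformly small in $t$ by (\ref{priori1}) applied to the high-frequency part; letting $t'\to t$ and then $N\to\infty$ yields $f\in\mathcal{C}([0,T];B_{p,r}^{s})$. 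When $r=\infty$ the tail cannot be forced to vanish at the endpoint, so the same scheme yields only strong continuity in $B_{p,\infty}^{s'}$ for every $s'<s$ together with weak-$*$ continuity $\mathcal{C}_w([0,T];B_{p,\infty}^{s})$, which is precisely the stated conclusion. I expect the strong-compactness step and this top-index continuity to be the principal technical obstacles, the remaining ingredients being routine consequences of Lemmas \ref{22lem1}--\ref{besovProperty}.
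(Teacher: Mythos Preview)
The paper does not supply a proof of this lemma at all: it is quoted verbatim from \cite{BahouriCheminDanchin2011} and used as a black box in Section~3. Consequently there is no ``paper's own proof'' against which to compare your proposal.

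That said, your outline is essentially the standard proof one finds in \cite{BahouriCheminDanchin2011} (regularize the velocity and data, solve along smooth characteristics, derive uniform $B_{p,r}^{s}$ bounds from the a priori estimates, extract a limit, and recover the sharp time-continuity by the low/high-frequency splitting). Two small remarks: for the uniqueness step you should note that Lemma~\ref{22lem1} as stated in the paper already asserts uniqueness in $L^{\infty}([0,T];B_{p,r}^{s})$, so invoking (\ref{priori2}) on the difference is legitimate but somewhat redundant; and in the compactness step, the Aubin--Lions argument you sketch is the right idea, but the cleaner route in the reference is to show the sequence $(f^{(n)})$ is Cauchy in $\mathcal{C}([0,T];B_{p,r}^{s-1})$ directly from (\ref{priori2}) applied to $f^{(n)}-f^{(m)}$, which sidesteps any localization-in-$x$ issues. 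Otherwise your plan is sound.
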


\begin{lem} {\rm (A priori estimate in the Sobolev spaces)}\cite{BahouriCheminDanchin2011,
GuiLiu2010JFA}\label{22lem5} Let $0\leq\sigma<1$.
Let $f_{0} \in H^{\sigma},\, g \in L^{1}(0, T ; H^{\sigma})$ and $\partial_{x} v \in L^{1}(0, T ; L^{\infty}).$ Then the solution $f$
to Eq.~(\ref{transport}) belongs to $C([0, T] ; H^{\sigma}).$
  More precisely,
 there is a constant $C$
depending only on $\sigma$ such that
$$
\|f\|_{H^{\sigma}}\leq\|f_{0}\|_{H^{\sigma}}
+\int_{0}^{t}[\|g(\tau)\|_{H^{\sigma}}+CV^{\prime}(\tau)\|f(\tau)\|_{H^{\sigma}}] d\tau,\quad V(t)=\int_{0}^{t}\|\partial_{x} v(\tau)\|_{L^{\infty}} \mbox{d} \tau.
$$
\end{lem}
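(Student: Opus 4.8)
The plan is to run the classical Littlewood--Paley energy argument. Since $H^\sigma(\mathbb{R})=B^\sigma_{2,2}(\mathbb{R})$, this estimate is in fact the $p=r=2$, $p_1=\infty$, $d=1$ specialization of the general transport estimate in Lemma \ref{22lem1}: there the conditions $s\ge -d\min(1/p_1,1-1/p)=0$ and $s<1+d/p_1=1$ collapse to precisely $0\le\sigma<1$, and $V_{p_1}(t)$ reduces to $\int_0^t\|\partial_x v\|_{L^\infty}\,d\tau$. To make the argument self-contained I would first apply the dyadic block $\Delta_q$ to the transport equation \eqref{transport}. Writing $f_q=\Delta_q f$, $g_q=\Delta_q g$ and introducing the commutator $R_q=v\,\partial_x\Delta_q f-\Delta_q(v\,\partial_x f)$, the localized equation reads
\begin{equation*}
\partial_t f_q + v\,\partial_x f_q = g_q + R_q,\qquad f_q|_{t=0}=\Delta_q f_0.
\end{equation*}

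Next I would carry out an $L^2$ energy estimate block by block. Multiplying by $f_q$ and integrating over $\mathbb{R}$, the transport term is treated by integration by parts, $\int (v\,\partial_x f_q)f_q\,dx = -\tfrac12\int (\partial_x v)f_q^2\,dx$, which is bounded by $\tfrac12\|\partial_x v\|_{L^\infty}\|f_q\|_{L^2}^2$. After dividing out one factor of $\|f_q\|_{L^2}$ this gives the scalar differential inequality $\tfrac{d}{dt}\|f_q\|_{L^2}\le \tfrac12\|\partial_x v\|_{L^\infty}\|f_q\|_{L^2}+\|g_q\|_{L^2}+\|R_q\|_{L^2}$, and hence after integrating in time
\begin{equation*}
\|f_q(t)\|_{L^2} \le \|\Delta_q f_0\|_{L^2} + \int_0^t\Big[\tfrac12\|\partial_x v\|_{L^\infty}\|f_q\|_{L^2}+\|g_q\|_{L^2}+\|R_q\|_{L^2}\Big]\,d\tau.
\end{equation*}

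The crux is then the weighted $\ell^2$ summation. Multiplying by $2^{q\sigma}$ and taking the $\ell^2_q$ norm, Minkowski's inequality lets me move the summation inside the time integral, producing $\|f(t)\|_{H^\sigma}$, $\|g\|_{H^\sigma}$, and the commutator quantity $\big(\sum_q 2^{2q\sigma}\|R_q\|_{L^2}^2\big)^{1/2}$. The main obstacle, and the only place where the hypothesis $\sigma<1$ is genuinely used, is the commutator estimate
\begin{equation*}
\Big(\sum_q 2^{2q\sigma}\|v\,\partial_x\Delta_q f-\Delta_q(v\,\partial_x f)\|_{L^2}^2\Big)^{1/2} \le C\,\|\partial_x v\|_{L^\infty}\,\|f\|_{H^\sigma},\qquad 0\le\sigma<1,
\end{equation*}
which I would establish through Bony's paraproduct decomposition $v\,\partial_x f = T_v\,\partial_x f + T_{\partial_x f}v + R(v,\partial_x f)$, estimating the localized pieces of the commutator separately: the paraproduct term $[\Delta_q,T_v]\partial_x f$ is controlled by a mean-value argument on the kernel yielding the factor $\|\partial_x v\|_{L^\infty}$, while the high--high and remainder interactions are summable in $\ell^2$ precisely because $\sigma<1$. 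Inserting this bound and absorbing the $\tfrac12$ and the commutator constant into a single $C$ depending only on $\sigma$ gives
\begin{equation*}
\|f(t)\|_{H^\sigma}\le\|f_0\|_{H^\sigma}+\int_0^t\big[\|g(\tau)\|_{H^\sigma}+C\|\partial_x v(\tau)\|_{L^\infty}\|f(\tau)\|_{H^\sigma}\big]\,d\tau,
\end{equation*}
which is the asserted inequality with $V'(\tau)=\|\partial_x v(\tau)\|_{L^\infty}$.

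Finally, for the regularity claim $f\in C([0,T];H^\sigma)$, uniqueness and existence are already supplied by Lemmas \ref{22lem1}--\ref{22lem4}, so only time-continuity in the top norm remains. I would argue by approximation: mollify $f_0$, $v$, $g$ to obtain smooth solutions $f^n$, use the uniform estimate above together with the equation (which controls $\partial_t f^n$ in $L^1(0,T;H^{\sigma-1})$) to pass to a limit lying in $L^\infty(0,T;H^\sigma)\cap C([0,T];H^{\sigma-1})$, and then upgrade weak continuity to strong continuity in $H^\sigma$ by applying the a priori estimate on shrinking time intervals, so that $t\mapsto\|f(t)\|_{H^\sigma}$ is continuous and coincides in the limit with the weakly continuous trajectory.
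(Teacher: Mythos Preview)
The paper does not supply its own proof of this lemma; it is quoted verbatim from the cited references \cite{BahouriCheminDanchin2011,GuiLiu2010JFA} as a known tool. Your outline is correct and is exactly the standard argument carried out in those references: localize with $\Delta_q$, run the $L^2$ energy estimate on each dyadic block, bound the commutator $[v\,\partial_x,\Delta_q]f$ via Bony's paraproduct decomposition (the restriction $\sigma<1$ enters precisely here), and take the weighted $\ell^2$ sum. Your remark that the statement is simply the $p=r=2$, $p_1=\infty$, $d=1$ instance of Lemma~\ref{22lem1}, with $V_{p_1}'(t)$ reducing to $\|\partial_x v(t)\|_{L^\infty}$, is also correct and would already suffice as a proof within the paper.
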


\section{Local well-posedness}

In this section,
we will give the proof of
the local well-posedness in Besov spaces to (\ref{sin-mCH}),
namely, Theorem \ref{thmLocal}.

\begin{proof}
We first employ the classical
Friedrichs regularization method
to construct the approximate solutions
to (\ref{sin-mCH}). Let $m^{(l+1)}$ solve   the following
linear transport equation  inductively
\begin{eqnarray}\label{lplus1-th}
\begin{cases}
 \partial_{t}m^{(l+1)}
+\mbox{sin}[(u^{(l)})^{2}-(u_{x}^{(l)})^{2}]\partial_{x}m^{(l+1)}
=-2\mbox{cos}[(u^{(l)})^{2}-(u_{x}^{(l)})^{2}]
u_{x}^{(l)}(m^{(l)})^{2},\\
m^{(l+1)}_{t=0}=m_{0}^{(l+1)}(x)=S_{l+1} m_{0},
\end{cases}
\end{eqnarray}
where $m^{(0)} \coloneqq 0$.

Suppose $m^{(l)} \in L^{\infty}(0, T ; B_{p, r}^{s-2}),$ where
$s-2>\max \{\frac{1}{p}, \frac{1}{2}\}$
and consequently
$B_{p, r}^{s-2}$ is an algebra.
So the right hand side of
Eq.~(\ref{lplus1-th}) is in
$L^{\infty}\left(0, T ; B_{p, r}^{s-2}\right).$
Hence, by Lemma \ref{22lem4} and the high regularity of $u,$
Eq.~(\ref{lplus1-th})
admits a global solution $m^{(l+1)}\in E_{p, r}^{s-2}$ for all positive $T$.

Applying (\ref{priori2}) in Lemma \ref{22lem1} to Eq.~(\ref{lplus1-th}) yields
\begin{eqnarray}\label{lplus1-th-1}
&&\|m^{(l+1)}(t)\|_{B^{s-2}_{p,r}}
\leq e^{C\int_{0}^{t}
\|\mbox{sin}[(u^{(l)})^{2}-(u_{x}^{(l)})^{2}]\|_{B^{s-2}_{p,r}}\mbox{d}\tau}
\|m_{0}\|_{B^{s-2}_{p,r}}\quad\quad\quad\quad\nonumber\\
&&\quad\quad\quad\quad\quad\quad
+C\int_{0}^{t}e^{C\int_{\tau}^{t}
\|\mbox{sin}[(u^{(l)})^{2}-(u_{x}^{(l)})^{2}]\|_{B^{s-2}_{p,r}}\mbox{d}\tau^{\prime}}
\|2\mbox{cos}[(u^{(l)})^{2}-(u_{x}^{(l)})^{2}]
u_{x}^{(l)}(m^{(l)})^{2}\|_{B^{s-2}_{p,r}}\mbox{d}\tau.
\end{eqnarray}
for  $l=0,1,2,\cdots.$

Using
Lemma \ref{21lem3} concerning
the product law in Besov spaces, one obtains
\begin{eqnarray}
\|\mbox{sin}[(u^{(l)})^{2}-(u_{x}^{(l)})^{2}]\|_{B^{s-2}_{p,r}}
\leq \|(u^{(l)})^{2}-(u_{x}^{(l)})^{2}\|_{B^{s-2}_{p,r}}
\leq \|u^{(l)}\|_{B^{s}_{p,r}}^{2},\label{lplus1-th-2}\\
\|2\mbox{cos}[(u^{(l)})^{2}-(u_{x}^{(l)})^{2}]
u_{x}^{(l)}(m^{(l)})^{2}\|_{B^{s-2}_{p,r}}
\leq C\|u_{x}^{(l)}(m^{(l)})^{2}\|_{B^{s-2}_{p,r}}
\leq C\|u^{(l)}\|_{B^{s}_{p,r}}^{3}.\label{lplus1-th-3}
\end{eqnarray}
Substituting (\ref{lplus1-th-2})-(\ref{lplus1-th-3}) to
(\ref{lplus1-th}), one finds
\begin{eqnarray}\label{lplus1-th-4}
\|u^{(l+1)}(t)\|_{B^{s}_{p,r}}
\leq e^{C\int_{0}^{t}\|u^{(l)}\|_{B^{s}_{p,r}}^{2}\mbox{d}\tau}
\|u_{0}\|_{B^{s}_{p,r}}
+C\int_{0}^{t}e^{C\int_{\tau}^{t}
\|u^{(l)}\|_{B^{s}_{p,r}}^{2}
\mbox{d}\tau^{\prime}}
\|u^{(l)}\|_{B^{s}_{p,r}}^{3}
\mbox{d}\tau.
\end{eqnarray}
Now, assume $\|u^{(l)}(t)\|_{B^{s}_{p,r}}\leq a(t)$.
Plugging this assumption into (\ref{lplus1-th-4})  leads to
\begin{eqnarray}\label{lplus1-th-5}
\|u^{(l+1)}(t)\|_{B^{s}_{p,r}}
\leq e^{C\int_{0}^{t}a^{2}\mbox{d}\tau}
\|u_{0}\|_{B^{s}_{p,r}}
+C\int_{0}^{t}e^{C\int_{\tau}^{t}
a^{2}\mbox{d}\tau^{\prime}}a^{3}\mbox{d}\tau.
\end{eqnarray}
Let the right hand side of inequality (\ref{lplus1-th-5})
be equal to $a$, we obtain
\begin{eqnarray*}
\begin{cases}
 \dot{a}=2Ca^{3},\\
 a(0)=\|u_{0}\|_{B^{s}_{p,r}}.
\end{cases}
\end{eqnarray*}
Solving this ordinary differential equation,
one deduces
\begin{eqnarray*}
a(t)=\|u_{0}\|_{B^{s}_{p,r}}
[1-4Ct\|u_{0}\|_{B^{s}_{p,r}}^{2}]^{-1/2}.
\end{eqnarray*}
Therefore, we conclude that the solution sequence
 $\{u^{(l)}\}_{l=1}^{\infty}$
of equation (\ref{lplus1-th}) is uniformly bounded
in $C([0,T]; B^{s}_{p,r})$ with
\begin{eqnarray*}
T< \frac{1}{4C\|u_{0}\|_{B^{s}_{p,r}}^{2}}.
\end{eqnarray*}

Next, we shall prove that $\{m^{(l+1)}\}_{l=1}^{\infty}$
is a Cauchy sequence  in  $C([0,T]; B^{s-3}_{p,r})$.

In fact, from equation (\ref{lplus1-th}), one derives
\begin{eqnarray}\label{difference}
&&\partial_{t}[m^{(l+i+1)}-m^{(l+1)}]
+\mbox{sin}[(u^{l+i})^{2}-(u_{x}^{l+i})^{2}]
\partial_{x}[m^{(l+i+1)}-m^{(l+1)}]\nonumber\\
&&=\bigg\{\mbox{sin}[(u^{(l)})^{2}-(u_{x}^{(l)})^{2}]
-\mbox{sin}[(u^{(l+i)})^{2}-(u_{x}^{(l+i)})^{2}]
\bigg\}\partial_{x}m^{(l+1)}\nonumber\\
&&-2\bigg\{
\mbox{cos}[(u^{(l+i)})^{2}-(u_{x}^{(l+i)})^{2}]u_{x}^{(l+i)}(m^{(l+i)})^{2}
-\mbox{cos}[(u^{(l)})^{2}-(u_{x}^{(l)})^{2}]u_{x}^{(l)}(m^{(l)})^{2}
\bigg\}\coloneqq g.
\end{eqnarray}
As a consequence of  Lemma \ref{22lem1},
one deduces
\begin{eqnarray}\label{difference-1}
&&\|m^{(l+i+1)}-m^{(l+1)}\|_{B^{s-3}_{p,r}}
\leq \exp\bigg\{C\int_{0}^{t}
\|\mbox{sin}[(u^{l+i})^{2}-(u_{x}^{l+i})^{2}]\|_{B^{s-3}_{p,r}}
\mbox{d}\tau\bigg\}\\
&&\times\bigg\{\|m^{(l+i+1)}-m^{(l+1)}\|_{B^{s-3}_{p,r}}
+\int_{0}^{t}\exp\bigg\{-C\int_{0}^{\tau}
\|\mbox{sin}[(u^{l+i})^{2}-(u_{x}^{l+i})^{2}]\|_{B^{s-3}_{p,r}}\mbox{d}\tau^{\prime}\bigg\}
\|g\|_{B^{s-3}_{p,r}}\mbox{d}\tau\bigg\}.\nonumber
\end{eqnarray}
 Lemma \ref{21lem3} enables us to conclude
\begin{eqnarray}\label{difference-2}
&&\bigg\|\bigg\{\mbox{sin}[(u^{(l)})^{2}-(u_{x}^{(l)})^{2}]
-\mbox{sin}[(u^{(l+i)})^{2}-(u_{x}^{(l+i)})^{2}]
\bigg\}\partial_{x}m^{(l+1)}\bigg\|_{B^{s-3}_{p,r}}\nonumber\\
&&\leq C\|\partial_{x}m^{(l+1)}\|_{B^{s-3}_{p,r}}
\|\mbox{sin}[(u^{(l)})^{2}-(u_{x}^{(l)})^{2}]
-\mbox{sin}[(u^{(l+i)})^{2}-(u_{x}^{(l+i)})^{2}]\|_{B^{s-2}_{p,r}}\nonumber\\
&&\leq C\|u^{(l+1)}\|_{B^{s}_{p,r}}
\bigg\|2\mbox{cos}\left[\frac{(u^{(l+i)})^{2}-(u_{x}^{(l+i)})^{2}
+(u^{(l)})^{2}-(u_{x}^{(l)})^{2}}{2}\right]\nonumber\\
&&\quad\quad\quad\quad\quad\quad
\times\mbox{sin}\left[\frac{(u^{(l+i)})^{2}-(u_{x}^{(l+i)})^{2}
-(u^{(l)})^{2}+(u_{x}^{(l)})^{2}}{2}\right]\bigg\|_{B^{s-2}_{p,r}}\nonumber\\
&&\leq C\|u^{(l+1)}\|_{B^{s}_{p,r}}
[\|(u^{(l+i)})^{2}-(u^{(l)})^{2}\|_{B^{s-2}_{p,r}}
+\|(u_{x}^{(l+i)})^{2}-(u_{x}^{(l)})^{2}\|_{B^{s-2}_{p,r}}]\nonumber\\
&&\leq C\|u^{(l+1)}\|_{B^{s}_{p,r}}
\|u^{(l)}-u^{(l+i)}\|_{B^{s-1}_{p,r}}
(\|u^{(l)}\|_{B^{s}_{p,r}}+
\|u^{(l+i)}\|_{B^{s}_{p,r}})
\end{eqnarray}
and
\begin{eqnarray}\label{difference-3}
&&\|\mbox{cos}[(u^{(l+i)})^{2}-(u_{x}^{(l+i)})^{2}]
u_{x}^{(l+i)}(m^{(l+i)})^{2}
-\mbox{cos}[(u^{(l)})^{2}-(u_{x}^{(l)})^{2}]u_{x}^{(l)}(m^{(l)})^{2}
\|_{B^{s-3}_{p,r}}\nonumber\\
&&\leq \|\{\mbox{cos}[(u^{(l+i)})^{2}-(u_{x}^{(l+i)})^{2}]
-\mbox{cos}[(u^{(l)})^{2}-(u_{x}^{(l)})^{2}]\}
u_{x}^{(l+i)}(m^{(l+i)})^{2}\|_{B^{s-3}_{p,r}}\nonumber\\
&&\quad\quad+\|\mbox{cos}[(u^{(l)})^{2}-(u_{x}^{(l)})^{2}]
(u_{x}^{(l+i)}-u_{x}^{(l)})(m^{(l+i)})^{2}\|_{B^{s-3}_{p,r}}\nonumber\\
&&\quad\quad+\|\mbox{cos}[(u^{(l)})^{2}-(u_{x}^{(l)})^{2}]
u_{x}^{(l)}((m^{(l+i)})^{2}-(m^{(l+i)})^{2})\|_{B^{s-3}_{p,r}}\nonumber\\
&&\leq \|m^{(l+i)}\|_{B^{s-3}_{p,r}}
\|m^{(l+i)}\|_{B^{s-2}_{p,r}}
\|u_{x}^{(l+i)}\|_{B^{s-2}_{p,r}}\nonumber\\
&&\quad\quad
\times
\bigg\|2\mbox{sin}[\frac{(u^{(l+i)})^{2}-(u_{x}^{(l+i)})^{2}
+(u^{(l)})^{2}-(u_{x}^{(l)})^{2}}
{2}]\mbox{sin}[\frac{(u^{(l+i)})^{2}-(u_{x}^{(l+i)})^{2}
-(u^{(l)})^{2}+(u_{x}^{(l)})^{2}}
{2}]\bigg\|_{B^{s-2}_{p,r}}\nonumber\\
&&\quad\quad+\|m^{(l+i)}\|_{B^{s-3}_{p,r}}
\|m^{(l+i)}\|_{B^{s-2}_{p,r}}
\|u_{x}^{(l+i)}-u_{x}^{(l)}\|_{B^{s-2}_{p,r}}\nonumber\\
&&\quad\quad+\|m^{(l+i)}-m^{(l)}\|_{B^{s-3}_{p,r}}
\|m^{(l+i)}+m^{(l)}\|_{B^{s-2}_{p,r}}
\|u_{x}^{(l)}\|_{B^{s-2}_{p,r}}\nonumber\\
&&\leq C\|u^{(l+i)}\|_{B^{s}_{p,r}}^{3}
(\|(u^{(l+i)})^{2}-(u^{(l)})^{2}\|_{B^{s-2}_{p,r}}
+\|(u_{x}^{(l+i)})^{2}-(u_{x}^{(l)})^{2}\|_{B^{s-2}_{p,r}})\nonumber\\
&&\quad\quad+C\|u^{(l+i)}\|_{B^{s}_{p,r}}^{2}
\|u^{(l+i)}-u^{(l)}\|_{B^{s-1}_{p,r}}\nonumber\\
&&\quad\quad+C(\|u^{(l+i)}\|_{B^{s}_{p,r}}
+\|u^{(l)}\|_{B^{s}_{p,r}})
\|u^{(l)}\|_{B^{s-1}_{p,r}}
\|u^{(l+i)}-u^{(l)}\|_{B^{s-1}_{p,r}}\nonumber\\
&&\leq C\|u^{(l+i)}\|_{B^{s}_{p,r}}^{3}
(\|u^{(l+i)}\|_{B^{s}_{p,r}}^{2}
+\|u^{(l)}\|_{B^{s}_{p,r}}^{2})
\|u^{(l+i)}-u^{(l)}\|_{B^{s-1}_{p,r}}\nonumber\\
&&\quad\quad+C\|u^{(l+i)}\|_{B^{s}_{p,r}}^{2}
\|u^{(l+i)}-u^{(l)}\|_{B^{s-1}_{p,r}}\nonumber\\
&&\quad\quad+C\|u^{(l+i)}\|_{B^{s}_{p,r}}^{2}
\|u^{(l+i)}-u^{(l)}\|_{B^{s-1}_{p,r}}
(\|u^{(l+i)}\|_{B^{s}_{p,r}}
+\|u^{(l)}\|_{B^{s}_{p,r}}).
\end{eqnarray}
Plugging (\ref{difference-2})-(\ref{difference-3})
into (\ref{difference-1}) leads to
\begin{eqnarray}\label{difference-4}
\|u^{(l+i+1)}-u^{(l+1)}\|_{B^{s-1}_{p,r}}
\leq C_{T}\bigg(2^{-l}
+\int_{0}^{t}\|u^{(l+i)}-u^{(l)}\|_{B^{s-1}_{p,r}}\mbox{d}\tau\bigg).
\end{eqnarray}
Next, notice that
\begin{eqnarray}
\begin{array}{rl}
\|m_{0}^{(l+i+1)}-m_{0}^{(l+1)}\|_{B^{s-3}_{p,r}}
&=\|S_{l+i+1}m_{0}-S_{l+1}m_{0}\|_{B^{s-3}_{p,r}} \v\\
&=\|\sum_{q=l+1}^{l+l}\Delta_{q}m_{0}\|_{B^{s-3}_{p,r}}\leq C2^{-l}\|m_{0}\|_{B^{s-3}_{p,r}}
\end{array}
\end{eqnarray}
and $\{m^{(l)}\}$ is bounded in $C([0,T]; B_{p,r}^{s-2})$,
we thus deduce
\begin{eqnarray}
\|m^{(l+i+1)}-m^{(l+1)}\|_{B^{s-3}_{p,r}}
\leq C_{T}\left(2^{-l}
+\int_{0}^{t}\|m^{(l+l)}-m^{(l)}\|_{B^{s-3}_{p,r}}\mbox{d}\tau\right).
\end{eqnarray}
So we  conclude that
\begin{eqnarray}
\|m^{(l+i+1)}-m^{(l+1)}\|_{C(0,T; B^{s-3}_{p,r})}
\leq \frac{C_{T}}{2^l}\sum_{k=0}^{l}\frac{(2TC_{T})^{k}}{k!}
+\frac{(TC_{T})^{l+1}}{(l+1)!}\|m^{(l)}-m^{(0)}\|_{C(0,T; B^{s-3}_{p,r})}.
\end{eqnarray}
Since $\{m^{(l)}\}$ is uniformly bounded
in ${C(0,T; B^{s-3}_{p,r})},$
one can find a new constant $C_{T}^{\prime}$
so that
\begin{equation*}
\|m^{(l+i+1)}-m^{(l+1)}\|_{C(0, T ; B_{p, r}^{s-3})} \leq \frac{C_{T}^{\prime}}{2^{n}}.
\end{equation*}
Accordingly, $\{m^{(n)}\}$ is  Cauchy  in $C(0, T ; B_{p, r}^{s-3})$ and   converges to some limit function $m \in C(0, T ; B_{p, r}^{s-3}).$

We next show the existence of the solution to equation (\ref{sin-mCH}) by proving that the obtained
 limit function $m$ satisfies equation
(\ref{sin-mCH}) in the sense of distribution and belongs to
$E^{s}_{p,r}.$

Firstly, using  Lemma \ref{besovProperty}(iii)
and the uniform boundedness of $\{m^{(l)}\}$ in
$L^{\infty}(0,T; B^{s-2}_{p,r})$, one derives
that $m\in L^{\infty}(0,T; B^{s-2}_{p,r}).$

Secondly, we claim that $\{m^{(l)}\}$
converges to $m$ in $ C(0,T; B^{s^{\prime}}_{p,r})$
for all $s^{\prime} < s-2.$
In fact, this is the consequence of  the following statement: $\|m_{l}-m\|_{B_{p, r}^{s^{\prime}}} \leq C\|m_{l}-m\|_{B_{p, r}^{s-3}}$
when $s^{\prime} \leq s-3$
and $
\|m_{l}-m\|_{B_{p, r}^{\prime}}
\leq C\|m_{l}-m\|_{B_{p, r}^{s-3}}^{\theta}(\|m_{l}\|_{B_{p, r}^{s-2}}+\|m\|_{B_{p, r}^{s-2}})^{1-\theta}
$
with $\theta=s-2-s^{\prime}$
when $s-3<s^{\prime} \leq s-2$
recalling
Lemma \ref{21lem2}.
This claim enables us to take limit in equation (\ref{lplus1-th})
to conclude that the limit function $m$ does satisfy equation (\ref{sin-mCH}).

Note that equation (\ref{sin-mCH}) can be recast as a transport equation
\begin{eqnarray}\label{meq}
\partial_{t}m
+\mbox{sin}(u^{2}-u_{x}^{2})
\partial_{x}m
=-2\mbox{cos}(u^{2}-u_{x}^{2})u_{x}m.
\end{eqnarray}
Since $m\in L^{\infty}(0,T; B^{s-2}_{p,r}),$
it is easy to deduce that the right hand side of the above
equation  also belongs to $L^{\infty}(0,T; B^{s-2}_{p,r})$
in view of the product law in Besov spaces and the
Sobolev embedding.
Consequently, Lemma \ref{22lem4} implies
$m\in C([0, T) ; B_{p, r}^{s-2})$ when $r<\infty$
or
$m\in C_{w}([0, T) ; B_{p, r}^{s-2})$ when $r=\infty.$
On the other hand, from the Moser-type estimates
in Lemma \ref{21lem4},
we infer that $[\mbox{sin}(u^{2}-u_{x}^{2})
]\partial_{x}m$
is bounded in $L^{\infty}(0,T; B^{s-3}_{p,r}).$
Therefore, one knows
$\partial_{t} m\in C([0, T) ; B_{p, r}^{s-3})$
when $r<\infty$ according to the  high regularity of $u$
and equation (\ref{sin-mCH}) and thus
$m\in E^{s-2}_{p,r}.$

Furthermore, a standard
use of a sequence of viscosity approximate solutions
$\{u_{\epsilon}\}_{\epsilon>0}$ for
(\ref{sin-mCH})
which converges uniformly in
$C([0, T]; B^{s-2}_{p,r }) \cap C^{1}([0, T ]; B^{s-3}_{p, r})$
implies the continuity of the solution $m$ in
$E^{s-2}_{p,r}(T)$.

To complete the proof of Theorem \ref{thmLocal},
we next show the uniqueness.

Let $m=u-u_{xx}$ and $n=v-v_{xx}$ both be solution
to (\ref{sin-mCH}).
Then we have
\begin{eqnarray}\label{unique}
&&\partial_{t}(m-n)
+\mbox{sin}(u^{2}-u_{x}^{2})\partial_{x}(m-n)\nonumber\\
&&\qquad =-2[\mbox{cos}[u^{2}-u_{x}^{2}]u_{x}m^{2}
-\mbox{cos}(v^{2}-v_{x}^{2})v_{x}n^{2}]
-[\mbox{sin}(v^{2}-v_{x}^{2})
-\mbox{sin}(u^{2}-u_{x}^{2})n_{x}]\coloneqq f,
\end{eqnarray}
where $(m-n)(0)=m_{0}-n_{0}=0.$

Using (\ref{priori1}) in Lemma \ref{22lem1},
one finds
\begin{eqnarray}\label{unique-1}
\|m-n\|_{B^{s-3}_{p,r}}
\leq \|m_{0}-n_{0}\|_{B^{s-3}_{p,r}}
+C\int_{0}^{t}\|m-n\|_{B^{s-3}_{p,r}}
\|\partial_{x}\mbox{sin}[u^{2}-u_{x}^{2}]\|_{B^{s-2}_{p,r}}
\mbox{d}\tau
+C\int_{0}^{t}\|f\|_{B^{s-3}_{p,r}}\mbox{d}\tau.
\end{eqnarray}
Again, thanks to the product law in the Besov spaces
stated in Lemma \ref{21lem3}
and the embedding relation in Lemma \ref{21lem1},
one deduces
\begin{eqnarray}\label{unique-2}
\|\partial_{x}\mbox{sin}[u^{2}-u_{x}^{2}]\|_{B^{s-2}_{p,r}}
\leq \|\mbox{sin}(u^{2}-u_{x}^{2})\|_{B^{s-1}_{p,r}}
\leq \|u^{2}-u_{x}^{2}\|_{B^{s-1}_{p,r}}
\leq C\|u\|_{B^{s}_{p,r}}^{2}.
\end{eqnarray}
The Moser-type estimates in Lemma \ref{21lem4}
leads to
\begin{eqnarray}\label{unique-3}
&&\|[\mbox{cos}(u^{2}-u_{x}^{2})u_{x}m^{2}
-\mbox{cos}(v^{2}-v_{x}^{2})v_{x}n^{2}]
\|_{B^{s-3}_{p,r}}\nonumber\\
&&\leq C\|\{\mbox{cos}(u^{2}-u_{x}^{2})
-\mbox{cos}(v^{2}-v_{x}^{2})\}u_{x}m^{2}\|_{B^{s-3}_{p,r}}
+C\|\mbox{cos}(v^{2}-v_{x}^{2})(u_{x}-v_{x})m^{2}
\|_{B^{s-3}_{p,r}}\nonumber\\
&&\quad\quad+C\|\mbox{cos}(v^{2}-v_{x}^{2})v_{x}(m^{2}-n^{2})
\|_{B^{s-3}_{p,r}}\nonumber\\
&&\leq C\bigg\|\mbox{sin}\frac{u^{2}-u_{x}^{2}+v^{2}-v_{x}^{2}}
{2}\mbox{sin}\frac{u^{2}-u_{x}^{2}-v^{2}+v_{x}^{2}}
{2}u_{x}m^{2}\bigg\|_{B^{s-3}_{p,r}}\nonumber\\
&&\quad\quad+\|m\|_{B^{s-3}_{p,r}}
\|m\|_{B^{s-2}_{p,r}}
\|u_{x}-v_{x}\|_{B^{s-2}_{p,r}}
+\|v_{x}\|_{B^{s-2}_{p,r}}
\|m-n\|_{B^{s-3}_{p,r}}
\|m+n\|_{B^{s-2}_{p,r}}\nonumber\\
&&\leq C\|m\|_{B^{s-3}_{p,r}}
\|m\|_{B^{s-2}_{p,r}}
\|u_{x}\|_{B^{s-2}_{p,r}}
(\|u\|_{B^{s}_{p,r}}+\|v\|_{B^{s}_{p,r}})
\|u-v\|_{B^{s-1}_{p,r}}\nonumber\\
&&\quad\quad+C\|m\|_{B^{s-3}_{p,r}}
\|m\|_{B^{s-2}_{p,r}}
\|u-v\|_{B^{s-1}_{p,r}}
+C\|v\|_{B^{s}_{p,r}}
(\|u\|_{B^{s}_{p,r}}+\|v\|_{B^{s}_{p,r}})
\|u-v\|_{B^{s-1}_{p,r}}
\end{eqnarray}
and
\begin{eqnarray}\label{unique-4}
&&\|\{\mbox{sin}[u^{2}-u_{x}^{2}]
-\mbox{sin}(v^{2}-v_{x}^{2})\}n_{x}
\|_{B^{s-3}_{p,r}}\nonumber\\
&&\leq C\|n_{x}\|_{B^{s-3}_{p,r}}
\|\mbox{cos}[(u^{2}-u_{x}^{2}+v^{2}-v_{x}^{2})/2]
\mbox{sin}[(u^{2}-u_{x}^{2}-v^{2}+v_{x}^{2})/2]
\|_{B^{s-2}_{p,r}}\nonumber\\
&&\leq C\|n_{x}\|_{B^{s-3}_{p,r}}
\|u-v\|_{B^{s-1}_{p,r}}
(\|u\|_{B^{s}_{p,r}}^{2}
+\|v\|_{B^{s}_{p,r}}^{2}).
\end{eqnarray}
Substituting (\ref{unique-2})-(\ref{unique-4}) into (\ref{unique-1}) yields
\begin{eqnarray}\label{unique-5}
\|u-v\|_{B^{s-1}_{p,r}}\leq \|u_{0}-v_{0}\|_{B^{s-1}_{p,r}}
+C\int_{0}^{t}\|u-v\|_{B^{s-1}_{p,r}}
(\|u\|_{B^{s}_{p,r}}
+\|u\|_{B^{s}_{p,r}}^{4}
+\|v\|_{B^{s}_{p,r}}
+\|v\|_{B^{s}_{p,r}}^{4})\mbox{d}\tau.
\end{eqnarray}
The Gronwall inequality then implies $u=v$ or $m=n$.
We thus complete the proof of Theorem \ref{thmLocal}.
\end{proof}

\section{Blow-up  criterion}
 The proof of the  blow-up criterion for the solution to the Cauchy problem (\ref{sin-mCH}) will be provided in this section.

\begin{proof}
The proof of Theorem \ref{thmBlowup} will
be divided into  three steps.
The method is mainly induction with respect to $s$.
Let us recall that equation (\ref{sin-mCH})
can be recast as
\begin{eqnarray}\label{blowup-1}
m_{t}+\mbox{sin}(u^{2}-u_{x}^{2})\partial_{x}m
=-2\mbox{cos}(u^{2}-u_{x}^{2})u_{x}m^{2}.
\end{eqnarray}
\textbf{Step 1}:
When $s\in (1/2,1)$,
applying  Lemma \ref{22lem5} to (\ref{blowup-1})
yields
\begin{eqnarray}\label{blowup-2}
\|m\|_{H^{s}}
\leq \|m_{0}\|_{H^{s}}
+C\int_{0}^{t}\|m\|_{H^{s}}\|
\partial_{x}\mbox{sin}(u^{2}-u_{x}^{2})
\|_{L^{\infty}}\mbox{d}\tau
+C\int_{0}^{t}
\|2\mbox{cos}(u^{2}-u_{x}^{2})u_{x}m^{2}\|_{H^{s}}
\mbox{d}\tau.
\end{eqnarray}
Using
 $u=(1-\partial_{x}^{2})^{-1} m=p * m$
and $\|p\|_{L^{1}}=\|\partial_{x} p\|_{L^{1}}=1$,
one finds after employing the Young inequality
that for   $s \in \mathbb{R}$
\begin{eqnarray}\label{blowup1}
\begin{array}{l}
\|u\|_{L^{\infty}}+\left\|u_{x}\right\|_{L^{\infty}}+\left\|u_{x x}\right\|_{L^{\infty}} \leq C\|m\|_{L^{\infty}},\v\\
\|u\|_{H^{s}}+\left\|u_{x}\right\|_{H^{s}}+\left\|u_{x x}\right\|_{H^{s}} \leq C\|m\|_{H^{s}}.
\end{array}
\end{eqnarray}
Using  (\ref{blowup1}), one derives
\begin{eqnarray}\label{blowup-3}
\|
\partial_{x}\mbox{sin}[u^{2}-u_{x}^{2}]
\|_{L^{\infty}}
=\|2\mbox{cos}[u^{2}-u_{x}^{2}]u_{x}m
\|_{L^{\infty}}
\leq C\|m\|_{L^{\infty}}^{2}
\end{eqnarray}
and
\bee\label{blowup-4}
\begin{array}{rl}
\|2\mbox{cos}[u^{2}-u_{x}^{2}]u_{x}m^{2}\|_{H^{s}}
&\leq C\|u_{x}m^{2}\|_{H^{s}}\nonumber\\
&\leq C\|m\|_{L^{\infty}}\|u_{x}m\|_{H^{s}}
+C\|m\|_{H^{s}}\|u_{x}m\|_{L^{\infty}}\nonumber\\
&\leq C\|m\|_{L^{\infty}}^{2}\|m\|_{H^{s}}.
\end{array}
\ene
Combining (\ref{blowup-2}), (\ref{blowup-3})
and (\ref{blowup-4}), one finds
\begin{eqnarray}\label{blowup-5}
\|m\|_{H^{s}}
\leq \|m_{0}\|_{H^{s}}
+C\int_{0}^{t}\|m\|_{H^{s}}
\|m\|_{L^{\infty}}^{2}
\mbox{d}\tau
\end{eqnarray}
or
\begin{eqnarray}\label{blowup-6}
\|m\|_{H^{s}}
\leq \|m_{0}\|_{H^{s}}
e^{C\int_{0}^{t}
\|m\|_{L^{\infty}}^{2}
\mbox{d}\tau}.
\end{eqnarray}
Consequently, if $\int_{0}^{T^{*}}\|m(\tau)\|_{L^{\infty}}^{2} \mbox{d} \tau<\infty$ for the maximal existence time $T^{*}<\infty$,
then the inequality (\ref{blowup-6}) implies that
$\underset{t \rightarrow T^{*}}{\limsup }\|m(t)\|_{H^{s}}<\infty,$
which contradicts our assumption on $T^{*}$. Thus we complete the proof of Theorem \ref{thmBlowup}
for the case $s\in (1/2,1).$

\textbf{Step 2}:
For $s\in [1,2),$
after differentiating (\ref{blowup-1})
once with respect to $x$, we obtain
\begin{eqnarray}\label{blowup-7}
m_{xt}+\mbox{sin}(u^{2}-u_{x}^{2})\partial_{x}m_{x}=-2\mbox{cos}(u^{2}-u_{x}^{2})u_{x}mm_{x}
-2\partial_{x}[\mbox{cos}(u^{2}-u_{x}^{2})u_{x}m^{2}].
\end{eqnarray}
Lemma \ref{22lem5} again yields
\begin{eqnarray}\label{blowup-8}
&&\|m_{x}\|_{H^{s-1}}
\leq \|m_{0x}\|_{H^{s-1}}
+C\int_{0}^{t}\|m_{x}\|_{H^{s-1}}
\|
\mbox{sin}(u^{2}-u_{x}^{2})
\|_{L^{\infty}}\mbox{d}\tau\\
&&\quad\quad\quad\quad\quad\quad
+C\int_{0}^{t}
\|-2\mbox{cos}(u^{2}-u_{x}^{2})u_{x}mm_{x}
-2\partial_{x}[\mbox{cos}(u^{2}-u_{x}^{2})u_{x}m^{2}]
\|_{H^{s-1}}
\mbox{d}\tau.\nonumber
\end{eqnarray}
Simple computation leads to
\begin{eqnarray}\label{blowup-9}
&&\|-2\mbox{cos}(u^{2}-u_{x}^{2})u_{x}mm_{x}
-2\partial_{x}[\mbox{cos}(u^{2}-u_{x}^{2})u_{x}m^{2}]
\|_{H^{s-1}}\nonumber\\
&&\quad\quad\leq C\|u_{x}mm_{x}\|_{H^{s-1}}
\|u_{x}m^{2}\|_{H^{s}}\nonumber\\
&&\quad\quad\leq C\|\partial_{x}(u_{x}m^{2})\|_{H^{s-1}}
+C\|u_{xx}m^{2}\|_{H^{s-1}}
+C\|m\|_{H^{s}}\|m\|_{L^{\infty}}^{2}\nonumber\\
&&\quad\quad\leq C\|m\|_{H^{s}}\|m\|_{L^{\infty}}^{2}.
\end{eqnarray}
Combining (\ref{blowup-3}) and (\ref{blowup-8})-(\ref{blowup-9}),
one deduces
\begin{eqnarray}\label{blowup-10}
\|m_{x}\|_{H^{s-1}}
\leq \|m_{0x}\|_{H^{s-1}}
+C\int_{0}^{t}\|m\|_{H^{s}}
+\|m\|_{L^{\infty}}^{2}\mbox{d}\tau.
\end{eqnarray}
Adapting the same argument as in \textbf{Step 1}, we
conclude that this theorem holds for $s\in [1,2).$

\textbf{Step 3}:
Assume $2 \leq l \in \mathbb{N}.$
Suppose (\ref{blowupTime}) holds when $l-1 \leq$ $s<l.$
Using induction, we should prove the validity of (\ref{blowupTime})
 for $l\leq s<l+1.$
 Applying $\partial_{x}^{l}$ to (\ref{blowup-1}) leads to
\begin{eqnarray}\label{blowup-11}
&&\partial_{t}\partial_{x}^{l}m
+\mbox{sin}(u^{2}-u_{x}^{2})\partial_{x}^{l+1}m=-\sum_{i=0}^{l-1}C_{l}^{i}
\partial_{x}^{l-i}\mbox{sin}(u^{2}-u_{x}^{2})
\partial_{x}^{i+1}m
-2\partial_{x}^{l}[\mbox{cos}(u^{2}-u_{x}^{2})u_{x}m^{2}]
\coloneqq f_{2}.\nonumber
\end{eqnarray}
Lemma \ref{22lem5} once again leads to
\begin{eqnarray}\label{blowup-12}
&&\|\partial_{x}^{l}m\|_{H^{s-l}}
\leq \|m_{0}\|_{H^{s}}
+C\int_{0}^{t}\|\partial_{x}^{l}m\|_{H^{s-l}}
\|\partial_{x}\mbox{sin}(u^{2}-u_{x}^{2})\|_{L^{\infty}}\mbox{d}\tau
+C\int_{0}^{t}\|f_{2}\|_{H^{s-l}}\mbox{d}\tau.
\end{eqnarray}

Using Lemma \ref{21lem5} and the Sobolev embedding inequality
produces
\begin{eqnarray}\label{blowup-13}
&&\|\sum_{i=0}^{l-1}C_{l}^{i}
\partial_{x}^{l-i}\mbox{sin}(u^{2}-u_{x}^{2})
\partial_{x}^{i+1}m\|_{H^{s-l}}\nonumber\\
&&\leq \sum_{i=0}^{l-1}C_{l}^{i}
\|\partial_{x}^{l-i}\mbox{sin}(u^{2}-u_{x}^{2})
\partial_{x}^{i+1}m\|_{H^{s-l}}\nonumber\\
&&\leq \bigg(
\|\partial_{x}^{l-i}\mbox{sin}(u^{2}-u_{x}^{2})\|_{H^{s-l+1}}
\|\partial_{x}^{i}m\|_{L^{\infty}}+\|\partial_{x}^{l-i}\mbox{sin}(u^{2}-u_{x}^{2})\|_{L^{\infty}}
\|\partial_{x}^{i+1}m\|_{H^{s-l}}
\bigg)\nonumber\\
&&\leq C\|m\|_{H^{l+1/2+\epsilon}}
\|\mbox{sin}(u^{2}-u_{x}^{2})\|_{H^{s-i+1}}+C\|m\|_{H^{s-l+i+1}}
\|\mbox{sin}(u^{2}-u_{x}^{2})\|_{H^{l-i+1/2+\epsilon}}\nonumber\\
&&\leq C\|m\|_{H^{l+1/2+\epsilon}}
\|u^{2}-u_{x}^{2}\|_{H^{s-i+1}}+C\|m\|_{H^{s-l+i+1}}
\|u^{2}-u_{x}^{2}\|_{H^{l-i+1/2+\epsilon}}\nonumber\\
&&\leq C\|m\|_{H^{s-1/2+\epsilon}}^{2}
\|m\|_{H^{s}},
\end{eqnarray}
where $\epsilon\in (0,1/8)$ so that $H^{\frac{1}{2}+\epsilon}(\mathbb{R}) \hookrightarrow L^{\infty}(\mathbb{R})$.

Direct computation gives
\begin{eqnarray}\label{blowup-14}
\|2\partial_{x}^{l}[\mbox{cos}(u^{2}-u_{x}^{2})u_{x}m^{2}]
\|_{H^{s-l}}
\leq C\|u_{x}m^{2}\|_{H^{s}}
\leq C\|m\|_{H^{s}}
\|m\|_{H^{l-1/2+\epsilon}}^{2}.
\end{eqnarray}
(\ref{blowup-12}) together with
(\ref{blowup-13})-(\ref{blowup-14}) yields
\begin{eqnarray}\label{blowup-15}
&&\|\partial_{x}^{l}m\|_{H^{s-l}}
\leq \|m_{0}\|_{H^{s}}
+C\int_{0}^{t}\|m\|_{H^{s}}
\|m\|_{H^{l-1/2+\epsilon}}^{2}\mbox{d}\tau
\end{eqnarray}
or
\begin{equation}\label{blowupk6}
\|m(t)\|_{H^{s}}
\leq\left\|m_{0}\right\|_{H^{s}}
 \exp \left\{C \int_{0}^{t}
 \left(\|m(\tau)\|_{H^{l-\frac{1}{2}+\epsilon}}^{2}+1\right) \mbox{d} \tau\right\}.
\end{equation}
Therefore, if the maximal existence time $T^{\star}<\infty$ satisfies
$\int_{0}^{T^{\star}}\|m(\tau)\|_{L^{\infty}}^{2} \mbox{d} \tau<\infty,$
then the uniqueness of the solution provided by
Theorem \ref{thmLocal} ensures the
uniform boundedness of
$\|m(t)\|_{H^{l-\frac{1}{2}+\epsilon}}$ in
$t \in (0, T^{*})$ recalling our   induction assumption, which together  with (\ref{blowupk6}) implies
$\limsup _{t \rightarrow T^{*}}\|m(t)\|_{H^{s}}<\infty,$ a contradiction. We thus complete the proof of Theorem \ref{thmBlowup}.
\end{proof}

\section{Wave-breaking}
In this section,
we will
prove Theorem \ref{waveBreaking}.
Before doing this, we shall
  first deduce  the precise blow-up quantity for strong solutions of (\ref{sin-mCH}).

Let $q(t, x)$ solve the following ordinary differential equation:
\begin{eqnarray}\label{character}
\begin{cases}
 \dfrac{\mbox{d}}{\mbox{d}t}q(t,x)
=\mbox{sin}(u^{2}-u_{x}^{2})(t,q(t,x)),\\
q(x,0)=x.
\end{cases}
\end{eqnarray}
After differentiating (\ref{character})
with respect to $x$,
 we obtain

\begin{lem}\label{lemDiffermophism}
Suppose $u_{0} \in H^{s}(\mathbb{R})$  with $s>\frac{5}{2}$.  Let $T^{*}>0$ be the maximal existence time of the
 solution $u$ to Eq.~(\ref{sin-mCH}).
   Then there exists a  unique solution $q \in C^{1}([0, T^{*}) \times \mathbb{R}; \mathbb{R})$  to Eq.~(\ref{character}) satisfying
\begin{eqnarray}\label{conservative2}
q_{x}(t,x)
 =\exp\bigg\{2\int_{0}^{t}\cos(u^{2}-u_{x}^{2})mu_{x}\mbox{d}s
 \bigg\}>0.
\end{eqnarray}
Moreover, we have the expression of $m(t, q(t, x))$ as
\begin{eqnarray}\label{conservative21}
m(t,q(t,x))
=m_{0}(x)\exp\bigg\{-2\int_{0}^{t}\cos(u^{2}-u_{x}^{2})mu_{x}(s,q(s,x))\mbox{d}s
 \bigg\},
\end{eqnarray}
which implies that the
sign and zeros of $m(x,t)$ are
the same as those of $m_{0}(x).$
\end{lem}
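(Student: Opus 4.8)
The plan is to treat \eqref{character} and its $x$-derivative as a coupled pair of linear (in the unknowns $q$ and $q_x$) transport-type ODEs along characteristics, and to read off the stated formulas by integrating them explicitly. First I would note that by Corollary~\ref{corLocal} (with $s>5/2$) the solution $u$ belongs to $C([0,T^*);H^s)\cap C^1([0,T^*);H^{s-1})$, so in particular $u$, $u_x$, $u_{xx}$, and hence $m=u-u_{xx}$, are bounded and Lipschitz in $x$ uniformly on compact time subintervals of $[0,T^*)$; consequently the map $x\mapsto\sin(u^2-u_x^2)(t,x)$ is $C^1$ in $x$ with bounded derivative and continuous in $t$. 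The classical Cauchy–Lipschitz (Picard–Lindel\"of) theorem for ODEs with a parameter then gives a unique solution $q(t,x)$ of \eqref{character} that is $C^1$ jointly in $(t,x)$ on $[0,T^*)\times\mathbb{R}$, and smooth dependence on the initial data $x$ legitimizes differentiating \eqref{character} in $x$.

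Next I would differentiate \eqref{character} with respect to $x$. Writing $\partial_x\big[\sin(u^2-u_x^2)(t,q(t,x))\big] = \big(2\cos(u^2-u_x^2)(u u_x - u_x u_{xx})\big)(t,q(t,x))\,q_x(t,x)$ and using $u - u_{xx} = m$, the bracket becomes $2\cos(u^2-u_x^2)\,u_x\,m$ evaluated at $(t,q(t,x))$. Hence $q_x$ solves the linear scalar ODE
\begin{equation*}
\frac{\md}{\md t}q_x(t,x) = 2\big(\cos(u^2-u_x^2)\,m\,u_x\big)(t,q(t,x))\, q_x(t,x),\qquad q_x(0,x)=1,
\end{equation*}
whose unique solution is $q_x(t,x)=\exp\{2\int_0^t (\cos(u^2-u_x^2)\,m\,u_x)(s,q(s,x))\,\md s\}$, which is strictly positive. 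This is \eqref{conservative2}. For \eqref{conservative21}, I would rewrite \eqref{sin-mCH} in its transport form \eqref{meq}, $m_t + \sin(u^2-u_x^2)\,m_x = -2\cos(u^2-u_x^2)\,u_x\,m$, and observe that $\frac{\md}{\md t}\big[m(t,q(t,x))\big] = (m_t + \sin(u^2-u_x^2)\,m_x)(t,q(t,x)) = -2\big(\cos(u^2-u_x^2)\,u_x\,m\big)(t,q(t,x))$. Thus $t\mapsto m(t,q(t,x))$ satisfies the linear ODE $\dot y = -2(\cos(u^2-u_x^2)\,u_x)(t,q(t,x))\,y$ with $y(0)=m_0(x)$, giving the exponential formula \eqref{conservative21}. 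Since the exponential factor never vanishes, $m(t,q(t,\cdot))$ has the same sign and the same zeros as $m_0$; because $q(t,\cdot)$ is a diffeomorphism of $\mathbb{R}$ (it is $C^1$ with $q_x>0$, and one checks it is onto using the uniform bound on $\sin(u^2-u_x^2)$), the sign and zero set of $m(t,\cdot)$ itself coincide with those of $m_0$.

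The main obstacle is essentially bookkeeping of regularity rather than a deep estimate: one must make sure that $u^2 - u_x^2$ is $C^1$ in $x$ so that the composition $\sin(u^2-u_x^2)(t,q(t,x))$ can legitimately be differentiated in $x$ and so that the integrand $\cos(u^2-u_x^2)\,m\,u_x$ in the exponentials is continuous in $s$ and locally bounded, which is exactly what $s>5/2$ buys via $H^{s-1}\hookrightarrow C^1$ (indeed $H^{s}\hookrightarrow C^2$). A secondary point requiring a word of care is the global-in-$x$ existence and surjectivity of the flow $q(t,\cdot)$: since $|\sin(u^2-u_x^2)|\le 1$, the right-hand side of \eqref{character} is globally bounded, so no finite-time blow-up in $x$ occurs and the flow map is a bijection of $\mathbb{R}$ for each $t\in[0,T^*)$; combined with $q_x>0$ this yields the diffeomorphism property, from which the final assertion about the sign and zeros of $m(t,\cdot)$ follows at once.
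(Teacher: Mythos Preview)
Your approach is essentially the same as the paper's: differentiate the characteristic ODE in $x$, recognize the resulting linear scalar ODE for $q_x$, integrate; then compute $\frac{\md}{\md t}m(t,q(t,x))$ via the transport form of the equation, recognize another linear scalar ODE, and integrate. You are in fact more careful than the paper about justifying the $C^1$ existence via Picard--Lindel\"of and about the surjectivity/diffeomorphism property of $q(t,\cdot)$.

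There is, however, one computational slip that breaks the derivation of \eqref{conservative21}. The transport form you quote from \eqref{meq}, namely $m_t+\sin(u^2-u_x^2)\,m_x=-2\cos(u^2-u_x^2)\,u_x\,m$, is missing a factor of $m$ on the right-hand side (this is a typo in the paper; the correct form, used in \eqref{blowup-1} and in the paper's own proof of the lemma, is $-2\cos(u^2-u_x^2)\,u_x\,m^2$, coming from $\partial_x\sin(u^2-u_x^2)=2\cos(u^2-u_x^2)u_x m$). Consequently the linear ODE along characteristics should read
\[
\dot y \;=\; -2\big(\cos(u^2-u_x^2)\,u_x\,m\big)(t,q(t,x))\,y,\qquad y(0)=m_0(x),
\]
with the extra factor $m$ in the coefficient; integrating this gives precisely the exponent $-2\int_0^t \cos(u^2-u_x^2)\,m\,u_x\,(s,q(s,x))\,\md s$ in \eqref{conservative21}. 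As you wrote it, the ODE $\dot y=-2(\cos(u^2-u_x^2)u_x)(t,q)\,y$ would produce an exponent containing only $u_x$, not $mu_x$, and so does not actually yield \eqref{conservative21}. Once this is corrected, your argument is complete and matches the paper's.
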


\begin{proof}
Applying $\partial_{x}$ to Eq.~(\ref{character}) leads to
\begin{eqnarray}\label{conservative3}
\begin{cases}
\dfrac{\mbox{d}}{\mbox{d}t}q_{x}(t,x)
=2\mbox{cos}(u^{2}-u_{x}^{2})u_{x}m(t,q(t,x))q_{x}(t,x),\v\\
q_{x}(0,x)=1.
\end{cases}
\end{eqnarray}
Solving (\ref{conservative3}) produces the solution given by (\ref{conservative2}).
The Sobolev embedding inequality
gives for $T^{\prime}<T^{*}$
\begin{eqnarray*}
\sup_{(s,x)\in [0,T^{\prime})\times \mathbb{R}}
|2\mbox{cos}(u^{2}-u_{x}^{2})u_{x}m(s,x)|<\infty,
\end{eqnarray*}
which along with (\ref{conservative2}) yields $q_{x}(t,x)\geq \exp(-Ct),\, (t,x)\in [0,T^*)\times \mathbb{R}$ for some $C>0$,
and consequently
$q(t, \cdot)$ is an increasing diffeomorphism of $\mathbb{R}$ before the blow-up time,
that is, (\ref{conservative2}) holds.

It follows from Eqs.~(\ref{sin-mCH}) and (\ref{character}) that
\begin{eqnarray}\label{conservative6}
\frac{\mbox{d}}{\mbox{d}t}m(t, q(t,x))
&=&m_{t}(t, q(t,x))
+m_{x}(t, q(t,x))q_{t}(t,x)\nonumber\\
&=&m_{t}(t, q(t,x))
+m_{x}(t, q(t,x))
[\mbox{sin}(u^{2}-u_{x}^{2})](t, q(t,x))\nonumber\\
&=&[m_{t}
+m_{x}(\mbox{sin}(u^{2}-u_{x}^{2}))](t, q(t,x))\nonumber\\
&=&[-2\mbox{cos}(u^{2}-u_{x}^{2})u_{x}m](t,q(t,x))m(t, q(t,x)).
\end{eqnarray}
Solving Eq.~(\ref{conservative6}) yields Eq.~(\ref{conservative21}). We thus complete the proof of Lemma \ref{lemDiffermophism}.
\end{proof}
We next deduce the precise blow-up quantity
of the Cauchy problem (\ref{sin-mCH}).

\begin{lem}\label{blowupQuantityThm} Suppose $u_{0} \in H^{s}(\mathbb{R})$ with $s>\frac{5}{2}$. Let  $T^{*}>0$ be the maximal existence time of the solution  $u$ to the Cauchy problem (\ref{sin-mCH}).  Then the solution $u$ blows up in finite time if and only if
\begin{eqnarray}\label{blowupQuantity}
  \liminf _{t \rightarrow T^{*}}\left(\inf _{x \in \mathbb{R}}\left(2\cos(u^{2}-u_{x}^{2})u_{x}m(t,x)\right)\right)=-\infty.
\end{eqnarray}
\end{lem}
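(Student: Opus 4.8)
The plan is to read this equivalence as the sharp, pointwise‑in‑time version of the blow‑up criterion of Theorem~\ref{thmBlowup}, using Lemma~\ref{lemDiffermophism} as the dictionary between the two. Write $N(t):=\inf_{x\in\mathbb{R}}\big(2\cos(u^{2}-u_{x}^{2})u_{x}m\big)(t,x)$ for the quantity appearing in (\ref{blowupQuantity}). It is useful to note first that $\partial_{x}(u^{2}-u_{x}^{2})=2u_{x}m$, so the blow‑up quantity is exactly $\partial_{x}\big[\sin(u^{2}-u_{x}^{2})\big]$, equivalently --- by (\ref{conservative2}) --- the quantity $\frac{d}{dt}\log q_{x}(t,x)$ evaluated along the characteristics of (\ref{character}); thus $N(t)\to-\infty$ is precisely the collapse of the Jacobian $q_{x}$, the standard mechanism of wave breaking. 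Secondly, for each $T'<T^{*}$ the assumption $s>5/2$ gives $u\in C([0,T'];H^{s})$, hence $u,u_{x}\in C([0,T'];L^{\infty})$ and $m\in C([0,T'];H^{s-2})\hookrightarrow C([0,T'];L^{\infty})$, so $(t,x)\mapsto 2\cos(u^{2}-u_{x}^{2})u_{x}m(t,x)$ is continuous and bounded on $[0,T']\times\mathbb{R}$; in particular $N$ is finite on $[0,T^{*})$ and bounded on every compact subinterval.

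For the implication ``finite‑time breakdown $\Rightarrow$ (\ref{blowupQuantity})'' I would argue by contraposition. If $\liminf_{t\to T^{*}}N(t)>-\infty$ then, together with the local boundedness just recorded, there is a constant $K\ge0$ with $N(t)\ge-K$ for all $t\in[0,T^{*})$. Substituting this into the Lagrangian representation (\ref{conservative21}) gives, for $0\le t<T^{*}$,
\[
|m(t,q(t,x))|=|m_{0}(x)|\exp\Big\{-\!\int_{0}^{t}\!\big[2\cos(u^{2}-u_{x}^{2})u_{x}m\big](s,q(s,x))\,ds\Big\}\le\|m_{0}\|_{L^{\infty}}\,e^{Kt},
\]
because $\big[2\cos(u^{2}-u_{x}^{2})u_{x}m\big](s,q(s,x))\ge N(s)\ge-K$ for every $s\in[0,t]$. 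Since, by Lemma~\ref{lemDiffermophism}, $x\mapsto q(t,x)$ is for each fixed $t<T^{*}$ an increasing $C^{1}$‑diffeomorphism of $\mathbb{R}$ onto $\mathbb{R}$, taking the supremum in $x$ yields $\|m(t)\|_{L^{\infty}}\le\|m_{0}\|_{L^{\infty}}e^{Kt}$ on $[0,T^{*})$. If $T^{*}<\infty$, this forces $\int_{0}^{T^{*}}\|m(t)\|_{L^{\infty}}^{2}\,dt\le\|m_{0}\|_{L^{\infty}}^{2}\int_{0}^{T^{*}}e^{2Kt}\,dt<\infty$, contradicting (\ref{blowupTime}); hence $T^{*}=\infty$, i.e.\ no finite‑time breakdown, which is the contrapositive we wanted.

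The converse, ``(\ref{blowupQuantity}) $\Rightarrow$ finite‑time breakdown'', is the easy direction: the crude bound $\big|2\cos(u^{2}-u_{x}^{2})u_{x}m\big|\le 2\|u_{x}\|_{L^{\infty}}\|m\|_{L^{\infty}}\le C\|m\|_{L^{\infty}}^{2}$ --- the last step being (\ref{blowup1}) --- gives $N(t)\ge-C\|m(t)\|_{L^{\infty}}^{2}$, so (\ref{blowupQuantity}) forces $\limsup_{t\to T^{*}}\|m(t)\|_{L^{\infty}}=\infty$ and hence $\limsup_{t\to T^{*}}\|u(t)\|_{H^{s}}=\infty$; combined with the fact that $N$ remains bounded on every compact subinterval of $[0,T^{*})$, the maximality of $T^{*}$ in Corollary~\ref{corLocal} then forces $T^{*}<\infty$. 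The main obstacle is entirely in the first implication: one has to be careful that the lower bound $N\ge-K$ propagates over the whole interval $[0,T^{*})$ and not merely near $T^{*}$ (this is where the continuity/Sobolev‑embedding remark enters), and that $q(t,\cdot)$ is a genuine surjective diffeomorphism so that $\sup_{x}|m(t,q(t,x))|$ really equals $\|m(t)\|_{L^{\infty}}$; once these points are secured, the rest is a direct substitution into (\ref{conservative21}) and an appeal to Theorem~\ref{thmBlowup}.
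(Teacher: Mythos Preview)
Your proof is correct and follows essentially the same route as the paper: both directions rest on the Lagrangian representation (\ref{conservative21}) from Lemma~\ref{lemDiffermophism}, with the main implication reduced to the integral criterion of Theorem~\ref{thmBlowup} via the exponential bound on $\|m(t)\|_{L^\infty}$, and the converse dispatched by Sobolev embedding. Your write-up is in fact more careful than the paper's --- you make explicit why the hypothesis $\liminf_{t\to T^*}N(t)>-\infty$ upgrades to a uniform lower bound on all of $[0,T^{*})$, and why the surjectivity of $q(t,\cdot)$ is needed to pass from the Lagrangian estimate to an honest $L^{\infty}$ bound --- while the paper records only the bare computation.
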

\begin{proof}
From the expression of $m(t,q(t,x))$   in Eq.~(\ref{conservative21}),
  we conclude that
  if there exists a positive constant $K_{1}$
  such that
  \begin{equation*}
  \inf _{x \in \mathbb{R}} (-2\cos(u^{2}-u_{x}^{2})u_{x}m(t,x))
  \geq -K_{1}, \quad 0\leq t\leq T^{*},
  \end{equation*}
then
  \begin{equation*}
  \begin{aligned}
\|m(t)\|_{L^{\infty}}
=\|m(t, q(t, x))\|_{L^{\infty}}= m_{0}(x)
\exp \left(-2 \int_{0}^{t}\left(\mbox{cos}(u^{2}-u_{x}^{2})
u_{x}m(\tau, q(\tau, x)) \mbox{d}\tau\right)\right)  \leq e^{ K_{1} T^{*}}\|m_{0}\|_{L^{\infty}},
\end{aligned}
  \end{equation*}
  which combined with Theorem \ref{thmBlowup}
  implies that $m(t,x)$ will not blow up in a finite time.

  However, if (\ref{blowupQuantity}) holds true,
  then the Sobolev embedding
  ensure that $m(t,x)$ will  blow up in a finite time.
  We thus complete the proof
   of Lemma  \ref{blowupQuantityThm}.
\end{proof}
Now we are in a position  to
prove Theorem \ref{waveBreaking}.
\begin{proof}[Proof of Theorem \ref{waveBreaking}]
Set the precise blow-up quantity as
\begin{eqnarray}\label{M}
M=\mbox{cos}(u^{2}-u_{x}^{2})mu_{x}.
\end{eqnarray}
Next, we  need to  estimate  the dynamics of $M$
along the characteristic.
To do this, we first compute
\begin{eqnarray}\label{u}
&&(1-\partial_{x}^{2})
[u_{t}+
\mbox{sin}(u^{2}-u_{x}^{2})u_{x}]\nonumber\\
&&=m_{t}+(1-\partial_{x}^{2})
[\mbox{sin}(u^{2}-u_{x}^{2})u_{x}]\nonumber\\
&&=-\mbox{sin}(u^{2}-u_{x}^{2})
(u_{x}-u_{xxx})
-2\mbox{cos}(u^{2}-u_{x}^{2})u_{x}m^{2}+\mbox{sin}(u^{2}-u_{x}^{2})u_{x}
-\partial_{x}^{2}
[\mbox{sin}(u^{2}-u_{x}^{2})u_{x}]\nonumber\\
&&=\mbox{sin}(u^{2}-u_{x}^{2})u_{xxx}
-2\mbox{cos}(u^{2}-u_{x}^{2})u_{x}m^{2}
-\partial_{x}^{2}
[\mbox{sin}(u^{2}-u_{x}^{2})u_{x}]\nonumber\\
&&=-2uM-2\partial_{x}(u_{x}M),
\end{eqnarray}
from which we derive
\begin{eqnarray}\label{u-1}
u_{t}+
\mbox{sin}(u^{2}-u_{x}^{2})u_{x}
=-2p*(uM)
-2p_{x}*(u_{x}M).
\end{eqnarray}

Differentiating (\ref{u-1})
with respect to $x$ then produces
\begin{eqnarray}\label{ux}
u_{xt}+
\mbox{sin}(u^{2}-u_{x}^{2})u_{xx}
=-2p_{x}*(uM)
-2p*(u_{x}M).
\end{eqnarray}

Recasting (\ref{sin-mCH}) leads to
\begin{eqnarray}\label{m}
m_{t}+\mbox{sin}(u^{2}-u_{x}^{2})m_{x}=-2mM.
\end{eqnarray}
Combining (\ref{u-1})-(\ref{m}), one obtains
\begin{eqnarray}\label{M-1}
&&M_{t}+\mbox{sin}(u^{2}-u_{x}^{2})M_{x}\nonumber\\
&&=-\mbox{sin}(u^{2}-u_{x}^{2})mu_{x}
\{2u[-2p*(uM)
-2p_{x}*(u_{x}M)]
-2u_{x}[-2p_{x}*(uM)
-2p*(u_{x}M)]\}\nonumber\\
&&\quad\quad+\mbox{cos}(u^{2}-u_{x}^{2})u_{x}(-2mM)
+\mbox{cos}(u^{2}-u_{x}^{2})m
[-2p_{x}*(uM)
-2p*(u_{x}M)]\nonumber\\
&&= 4\mbox{sin}(u^{2}-u_{x}^{2})muu_{x}
[p*(uM)+p_{x}*(u_{x}M)]-4\mbox{sin}(u^{2}-u_{x}^{2})mu_{x}^{2}
[p_{x}*(uM)+p*(u_{x}M)]\qquad \nonumber\\
&&\quad\quad-2M^{2}
-2\mbox{cos}(u^{2}-u_{x}^{2})m
[p_{x}*(uM)+p*(u_{x}M)].
\end{eqnarray}
The positivity of the initial data $m_{0}$
and the equality $u=p*m$
yields $|u_{x}|\leq u$, accordingly, one obtains
\begin{eqnarray}
(\ref{M-1})\leq -2M^{2}
+C(\|u\|_{L^{\infty}}^{5}
+\|u\|_{L^{\infty}}^{3})m,
\end{eqnarray}
which combined with the definition of $\bar{M}(t)$
gives
\begin{eqnarray}\label{M-2}
\frac{\mbox{d}}{\mbox{d}t}\bar{M}(t)
\!\!\!&&=(M_{t}+\mbox{sin}(u^{2}-u_{x}^{2})M_{x})(t, q(t,x_{0}))\nonumber\\
&&\leq -2\bar{M}^{2}(t)
+C(\|u_{0}\|_{H^{1}}^{5}
+\|u_{0}\|_{H^{1}}^{3})\bar{m}\nonumber\\
&&\leq -2\bar{M}^{2}(t)
+C_{1}\bar{m},
\end{eqnarray}
where $C_{1}=C(\|u_{0}\|_{H^{1}}^{5}
+\|u_{0}\|_{H^{1}}^{3}).$

On the other hand, one easily deduces
\begin{eqnarray}\label{M-3}
\frac{\mbox{d}}{\mbox{d}t}\bar{m}(t)
=-2\bar{m}\bar{M}.
\end{eqnarray}
From (\ref{M-2})-(\ref{M-3}), we have
\bee\label{M-4}
\begin{array}{rl}
\dfrac{\mbox{d}}{\mbox{d}t}
\bigg(\dfrac{\bar{M}(t)}{\bar{m}(t)}\bigg)
&=\dfrac{1}{\bar{m}^{2}(t)}\{\bar{M}^{\prime}(t)\bar{m}(t)
-\bar{M}(t)\bar{m}^{\prime}(t)\}\v\\
&\leq \dfrac{1}{\bar{m}^{2}(t)}
\{\bar{m}(t)(-2\bar{M}^{2}(t)+C_{1}\bar{m})
-\bar{M}(-2\bar{m}\bar{M})
\}
=C_{1}.
\end{array}
\ene
Integrating (\ref{M-4}) from 0 to $t$ yields
\begin{eqnarray}\label{M-5}
\frac{\bar{M}(t)}{\bar{m}(t)}
\leq \frac{\bar{M}(0)}{\bar{m}(0)}
+C_{1}t
\end{eqnarray}
or
\begin{eqnarray}\label{M-6}
\bar{M}(t)
\leq \bigg(\frac{\bar{M}(0)}{\bar{m}(0)}
+C_{1}t\bigg)\bar{m}(t).
\end{eqnarray}
Consequently, one derives
\begin{eqnarray}\label{M-7}
&&\frac{\mbox{d}}{\mbox{d}t}
\bigg(\frac{1}{\bar{m}(t)}\bigg)
=-\frac{1}{\bar{m}^{2}(t)}
(-2\bar{m}\bar{M})
=2\frac{\bar{M}(t)}{\bar{m}(t)}
\leq 2\bigg(\frac{\bar{M}(0)}{\bar{m}(0)}
+C_{1}t\bigg).
\end{eqnarray}

Integrating once again, we obtain
\begin{eqnarray}\label{M-8}
0<
\frac{1}{\bar{m}(t)}
\leq 2\bigg(\frac{\bar{M}(0)}{\bar{m}(0)}t
+\frac{1}{2}C_{1}t^{2}\bigg)
+\frac{1}{\bar{m}(0)}
\coloneqq h(t).
\end{eqnarray}
Since $\bar{M}(0)<0$,
then $h^{\prime}(0)<0$.
  According to the fact that
   $\lim _{t \rightarrow \infty} h^{\prime}(t)=+\infty$ and the continuity of the function $h^{\prime}(t)$,
   there exists a $\xi>0$ such that $h^{\prime}(\xi)=0.$
    Under the assumption (\ref{assumption}), we have $h(\xi)<0$.
    Note that $h(0)=\frac{1}{\bar{m}(0)}>0$
    and $h(t) \in C[0,+\infty)$, one can find some $T^{*} \in(0, \xi)$ such that
\begin{equation}\label{1overm}
0<\frac{1}{\bar{m}(t)}
\leq
 h(t) \rightarrow 0, \quad \text { as } t \rightarrow T^{*},
\end{equation}
which indicates that $\lim _{t \rightarrow T^{*}} \bar{m}(t)=+\infty$.
Since
 $\frac{\bar{M}(0)}{\bar{m}(0)}
+C_{1}T^{*}<\frac{\bar{M}(0)}{\bar{m}(0)}
+C_{1}\xi=0$, then by
(\ref{M-6}) and (\ref{1overm}),
we derive
$$
\inf _{x \in \mathbb{R}} [\mbox{cos}(u^{2}-u_{x}^{2})mu_{x}](t, x) \rightarrow-\infty, \quad \text { as } t \rightarrow T^{*}.
$$
Using   Lemma \ref{blowupQuantityThm},
we conclude that
the solution $m$ blows up at the time $T^{*} \in(0, \xi]$.
We thus  complete the proof of Theorem \ref{waveBreaking}.
\end{proof}

\vspace{0.15in}
\noindent \textbf{Declarations}

\vspace{0.15in}
\noindent \textbf{Funding}\, This work was partially supported by the NSF of China under Grants Nos. 11925108 and 11731014.

\vspace{0.15in}
\noindent \textbf{Conflicts of interest/Competing interests}\, There are no conflicts of interest.

\vspace{0.15in}
\noindent \textbf{Availability of data and material}\, The manuscript has no  associated data.

\vspace{0.15in}
\noindent \textbf{Code availability}\, The manuscript has no  associated code.

\vspace{0.15in}
\noindent \textbf{Authors' contributions}\, These authors contributed equally to this work.

\end{document}